\titleformat*{\subsection}{\Large\bfseries}
\titleformat*{\subsubsection}{\large\bfseries}
\titleformat*{\paragraph}{\large\bfseries}
\titleformat*{\subparagraph}{\large\bfseries}
\renewcommand{\@seccntformat}[1]{\csname the#1\endcsname.}
\renewenvironment{abstract}{%
    \if@twocolumn
      \section*{\abstractname}%
    \else 
      \begin{center}%
        {\bfseries \Large\abstractname\vspace{\z@}}
      \end{center}%
      \quotation
    \fi}
    {\if@twocolumn\else\endquotation\fi}
\theoremstyle{plain}
\newtheorem{thm}{Theorem}[section]
\newtheorem{lem}[thm]{Lemma}
\theoremstyle{definition}
\newtheorem{defn}[thm]{Definition}
\newtheorem{cor}[thm]{Corolarry}
\providecommand{\keywords}[1]{{\bf{Keywords:}} #1}
\providecommand{\subjectclass}[2]{\textbf{Mathematics subject classification 2020:} #1}
\title{{\bf Elementary characterization of essential $\mathcal{F}$-sets
and its combinatorial consequences}}
\author {Dibyendu De  \\  {\bf dibyendude@gmail.com}
  \footnote{ Department of Mathematics, 
          University of Kalyani, 
          Kalyani-741235,
          Nadia, West Bengal, India.
          }

           \and          
Pintu Debnath \\  {\bf pintumath1989@gmail.com}
\footnote{Department of Mathematics, Basirhat College, Basirhat -743412, North
24th parganas, West Bengal, India.
          }
           \and
Sayan Goswami \\ {\bf sayan92m@gmail.com} 
\footnote{Department of Mathematics, 
          University of Kalyani, 
          Kalyani-741235,
          Nadia, West Bengal, India
          }
          \footnote{The author acknowledges the grant of UGC NET SRF fellowship.}
}
\begin{document}
\maketitle

\begin{abstract}
\noindent There is a long history of studying Ramsey theory using
the algebraic structure of the Stone-\v{C}ech compactification of discrete
semigroup. It has been shown that various Ramsey theoretic structures
are contained in different algebraic large sets. In this article we
will deduce the combinatorial characterization of certain sets, that
are the member of the idempotent ultrafilters of the closed subsemigroup
of $\beta S$, arising from certain Ramsey family. In a special case 
when $S=\mathbb{N}$, we will deduce
that sets which are the members of all idempotent ultrafilters of
those semigroups contain certain additive-multiplicative structures. Later 
we will generalize this result for weak rings, where we will show a non-commutative
 version of the additive-multiplicative structure.
\end{abstract}
\subjectclass{05D10}\\

\noindent \keywords{Stone-\v{C}ech compactification, closed subsets of $\beta\mathbb{N}$,
Ramsay families}

\section{Introduction}

For a set $S$, let $\mathcal{P}\left(S\right)$ be the collection
of all subsets of $S$ and $\mathcal{P}_f\left(S\right)$ be the set
of all finite subsets of $S$. A collection $\mathcal{F\subseteq P}\left(S\right)\setminus\left\{ \emptyset\right\} $
is called upward hereditary if whenever $A\in\mathcal{F}$ and $A\subseteq B\subseteq S,$
then it follows that $B\in\mathcal{F}$. A non-empty and upward hereditary
collection $\mathcal{F\subseteq P}\left(S\right)\setminus\left\{ \emptyset\right\} $
is called a family. If $\mathcal{F}$ is a family, the dual family
$\mathcal{F}^{*}$ is given by,
\[
\mathcal{F}^{*}=\{E\subseteq S:\forall A\in\mathcal{F},E\cap A\neq\emptyset.\}
\]
A family $\mathcal{F}$ possesses the Ramsey property if whenever
$A\in\mathcal{F}$ and $A=A_{1}\cup A_{2}$ there is some $i\in\left\{ 1,2\right\} $
such that $A_{i}\in\mathcal{F}$. 

\noindent Throughout the article we will need the following definitions.
\begin{defn}
\cite{key-12} Let $\left(S,\cdot\right)$ be a discrete semigroup.
\end{defn}

\begin{enumerate}
\item The set $A$ is thick if and only if for any finite subset $F$ of
$S$, there exists an element $x\in S$ such that $F\cdot x\subset A$.
This means the sets which contains a translation of any finite subset.
For example, one can see $\cup_{n\in\mathbb{N}}\left[2^{n},2^{n}+n\right]$
is a thick set in $\mathbb{N}$.
\item The set $A$ is syndetic if and only if there exists a finite subset
$G$ of $S$ such that $\bigcup_{t\in G}t^{-1}A=S$. That is, with
a finite translation if, the set which covers the entire semigroup,
then it will be called a Syndetic set. For example, the set of even
and odd numbers are both syndetic in $\mathbb{N}$.
\item A set $A\subseteq S$ is $IP$ set if and only if there exists a sequence
$\left\{ x_{n}\right\} _{n=1}^{\infty}$ in $S$ such that $FP\left(\left\{ x_{n}\right\} _{n=1}^{\infty}\right)\subseteq A$.
Where 
\[
FP\left(\left\{ x_{n}\right\} _{n=1}^{\infty}\right)=\left\{ \prod_{n\in F}x_{n}:F\in\mathcal{P}_{f}\left(\mathbb{N}\right)\right\} 
\]
and $\prod_{n\in F}x_{n}$ to be the product in increasing order.
\item The sets which can be written as an intersection of a syndetic and
a thick set are called $\mathit{Piecewise}$ $\mathit{syndetic}$
sets. More formally a set $A$ is $\mathit{Piecewise}$ $\mathit{syndetic}$
if and only if there exists $G\in\mathcal{P}_{f}\left(S\right)$ such
that for every $F\in\mathcal{P}_{f}\left(S\right)$, there exists
$x\in S$ such that $F\cdot x\subseteq\bigcup_{t\in G}t^{-1}A$. Clearly
the thick sets and syndetic sets are natural examples of $\mathit{Piecewise}$
$\mathit{syndetic}$ sets. From definition one can immediately see
that $2\mathbb{N}\cap\bigcup_{n\in\mathbb{N}}\left[2^{n},2^{n}+n\right]$
is a non-trivial example of $\mathit{Piecewise}$ $\mathit{syndetic}$
sets in $\mathbb{N}$.
\item $\mathcal{T}=\,^{\mathbb{N}}S$.
\item For $m\in\mathbb{N}$, $\mathcal{J}_{m}=\left\{ \left(t\left(1\right),\ldots,t\left(m\right)\right)\in\mathbb{N}^{m}:t\left(1\right)<\ldots<t\left(m\right)\right\} .$
\item Given $m\in\mathbb{N}$, $a\in S^{m+1}$, $t\in\mathcal{J}_{m}$ and
$f\in F$, 
\[
x\left(m,a,t,f\right)=\left(\prod_{j=1}^{m}\left(a\left(j\right)\cdot f\left(t\left(j\right)\right)\right)\right)\cdot a\left(m+1\right)
\]
where the terms in the product $\prod$ are arranged in increasing
order.
\item $A\subseteq S$ is called a $J$-set iff for each $F\in\mathcal{P}_{f}\left(\mathcal{T}\right)$,
there exists $m\in\mathbb{N}$, $a\in S^{m+1}$, $t\in\mathcal{J}_{m}$
such that, for each $f\in\mathcal{T}$,
\[
x\left(m,a,t,f\right)\in A.
\]
\item If the semigroup $S$ is commutative, the definition is rather simple.
In that case, a set $A\subseteq S$ is a $J$-set if and only if whenever
$F\in\mathcal{P}_{f}\left(^{\mathbb{N}}S\right)$, there exist $a\in S$
and $H\in\mathcal{P}_{f}\left(\mathbb{N}\right)$, such that for each
$f\in F$, $a+\sum_{t\in H}f(t)\in A$.
\end{enumerate}
Let us talk about positive density sets of $\mathbb{N}$. Unmodified
positive density means positive asymptotic density, and set of subsets
of $\mathbb{N}$ with positive asymptotic density is not a family
since it is not closed under passage to supersets. (The asymptotic
density of $A\subseteq\mathbb{N}$ is $d\left(A\right)=lim_{n\rightarrow\infty}\frac{\mid A\cap\left\{ 1,2,\ldots,n\right\} \mid}{n}$
provided that limit exists and undefined otherwise.). But $\mathcal{F}$
is the family of subsets of $\mathbb{N}$ with positive upper asymptotic
density. The upper asymptotic density of $A\subseteq\mathbb{N}$ is
$d\left(A\right)=limsup_{n\rightarrow\infty}\frac{\mid A\cap\left\{ 1,2,\ldots,n\right\} \mid}{n}$.

There are many families $\mathcal{F}$ with Ramsay property, where
for uniformity we consider following families for $\mathbb{\mathbb{N}}$.
\begin{itemize}
\item The infinite sets,
\item The piecewise syndetic sets,
\item The sets of positive upper asymptotic density,
\item The set containing arbitrary large arithmetic progression,
\item The set with property that $\sum_{n\in A}\frac{1}{n}=\infty$,
\item The $J$-sets,
\item The $IP$-sets.
\end{itemize}
\vspace{0.1in}

\subsubsection{A brief review of Topological algebra: \\
}

Let us recall some basic algebraic structure of the Stone-\v{C}ech
compactification. The set $\{\overline{A}:A\subset S\}$ is a basis
for the closed sets of $\beta S$. The operation `$\cdot$' on $S$
can be extended to the Stone-\v{C}ech compactification $\beta S$
of $S$ so that$(\beta S,\cdot)$ is a compact right topological semigroup
(meaning that for any \  is continuous) with $S$ contained in its
topological center (meaning that for any $x\in S$, the function $\lambda_{x}:\beta S\rightarrow\beta S$
defined by $\lambda_{x}(q)=x\cdot q$ is continuous). This is a famous
Theorem due to Ellis that if $S$ is a compact right topological semigroup
then the set of idempotents $E\left(S\right)\neq\emptyset$. A non-empty
subset $I$ of a semigroup $T$ is called a $\textit{left ideal}$
of $S$ if $TI\subset I$, a $\textit{right ideal}$ if $IT\subset I$,
and a $\textit{two sided ideal}$ (or simply an $\textit{ideal}$)
if it is both a left and right ideal. A $\textit{minimal left ideal}$
is the left ideal that does not contain any proper left ideal. Similarly,
we can define $\textit{minimal right ideal}$ and $\textit{smallest ideal}$.

Any compact Hausdorff right topological semigroup $T$ has the smallest
two sided ideal

\[
\begin{array}{ccc}
K(T) & = & \bigcup\{L:L\text{ is a minimal left ideal of }T\}\\
 & = & \,\,\,\,\,\bigcup\{R:R\text{ is a minimal right ideal of }T\}.
\end{array}
\]

Given a minimal left ideal $L$ and a minimal right ideal $R$, $L\cap R$
is a group, and in particular contains an idempotent. If $p$ and
$q$ are idempotents in $T$ we write $p\leq q$ if and only if $pq=qp=p$.
An idempotent is minimal with respect to this relation if and only
if it is a member of the smallest ideal $K(T)$ of $T$. Given $p,q\in\beta S$
and $A\subseteq S$, $A\in p\cdot q$ if and only if the set $\{x\in S:x^{-1}A\in q\}\in p$,
where $x^{-1}A=\{y\in S:x\cdot y\in A\}$. See \cite{key-12} for an
elementary introduction to the algebra of $\beta S$ and for any unfamiliar
details.

It will be easy to check that the family $\mathcal{F}$ has the Ramsey
property if and only if the family $\mathcal{F}^{*}$ is a filter. For a family
$\mathcal{F}$ with the Ramsey property, let $\beta(\mathcal{F})=\{p\in\beta S:p\subseteq\mathcal{F}\}$.
Then the following from \cite[Theorem 5.1.1]{key-4}.
\begin{thm}
Let $S$ be a discrete set. For every family $\mathcal{F\subseteq P}\left(S\right)$
with the Ramsay property, $\beta\left(\mathcal{F}\right)\subseteq\beta S$
is closed. Furthermore, $\mathcal{F}=\cup\beta\left(\mathcal{F}\right)$.
Also if $K\subseteq\beta S$ is closed, $\mathcal{F}_{K}=\left\{ E\subseteq S:\overline{E}\cap K\neq\emptyset\right\} $
is a family with the Ramsay property and $\overline{K}=\beta\left(\mathcal{F}_{K}\right)$.
\end{thm}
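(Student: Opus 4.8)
\section*{Proof proposal}

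The plan is to split the statement into its three constituent claims and dispatch them in increasing order of difficulty, using throughout only that the basic sets $\overline{A}=\{p\in\beta S:A\in p\}$ ($A\subseteq S$) form a clopen basis for $\beta S$, together with the standard identity $\overline{A\cup B}=\overline{A}\cup\overline{B}$. Note that the semigroup operation plays no role here; the content is purely the family/ultrafilter dictionary. I also record the harmless fact that $S\in\mathcal{F}$ for every family $\mathcal{F}$, since $\mathcal{F}$ is non-empty and upward hereditary.

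For closedness of $\beta(\mathcal{F})$ I would simply exhibit the complement as open. If $p\notin\beta(\mathcal{F})$ then $p\not\subseteq\mathcal{F}$, so there is $A\in p$ with $A\notin\mathcal{F}$; then $p\in\overline{A}$, and every $q\in\overline{A}$ has $A\in q$ with $A\notin\mathcal{F}$, so $q\notin\beta(\mathcal{F})$. Hence $\beta S\setminus\beta(\mathcal{F})=\bigcup\{\overline{A}:A\notin\mathcal{F}\}$ is open. This uses nothing but the definitions and not the Ramsey property.

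The equality $\mathcal{F}=\bigcup\beta(\mathcal{F})$ is where the work lies, and I expect the inclusion $\mathcal{F}\subseteq\bigcup\beta(\mathcal{F})$ to be the main obstacle. The reverse inclusion is immediate: if $A\in p$ for some $p\in\beta(\mathcal{F})$ then $A\in p\subseteq\mathcal{F}$. For the forward inclusion, fix $A\in\mathcal{F}$ and build an ultrafilter $p$ with $A\in p\subseteq\mathcal{F}$. I would apply Zorn's lemma to the poset of subfamilies $\mathcal{G}$ containing $A$ all of whose finite intersections lie in $\mathcal{F}$, ordered by inclusion; chains have their unions as upper bounds because the intersection condition is of finite character. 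Let $\mathcal{G}$ be maximal and let $p$ be the filter it generates. Since $\mathcal{F}$ is upward hereditary, every superset of a finite intersection of $\mathcal{G}$ lies in $\mathcal{F}$, so $p\subseteq\mathcal{F}$, and $\emptyset\notin\mathcal{F}$ keeps $p$ proper. The crux is to show $p$ is an ultrafilter. Given $B\subseteq S$, I claim $B$ or $S\setminus B$ is compatible with $\mathcal{G}$, i.e.\ can be adjoined preserving the condition. If not, there are finite intersections $D_{1},D_{2}$ of $\mathcal{G}$ with $D_{1}\cap B\notin\mathcal{F}$ and $D_{2}\cap(S\setminus B)\notin\mathcal{F}$; putting $D=D_{1}\cap D_{2}\in\mathcal{F}$, upward heredity forces $D\cap B\notin\mathcal{F}$ and $D\cap(S\setminus B)\notin\mathcal{F}$, yet $D=(D\cap B)\cup(D\cap(S\setminus B))$ contradicts the Ramsey property. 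By maximality the compatible set already lies in $\mathcal{G}$, so $B\in p$ or $S\setminus B\in p$; a proper filter deciding every subset is an ultrafilter, and it witnesses $A\in\bigcup\beta(\mathcal{F})$.

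For the final claim I would first verify that $\mathcal{F}_{K}$ is a family: upward heredity follows from $E\subseteq F\Rightarrow\overline{E}\subseteq\overline{F}$, while $\overline{\emptyset}=\emptyset$ keeps $\emptyset$ out and $\overline{S}=\beta S\supseteq K\neq\emptyset$ puts $S$ in. The Ramsey property is immediate from $\overline{E_{1}\cup E_{2}}=\overline{E_{1}}\cup\overline{E_{2}}$: if $\overline{E}\cap K\neq\emptyset$ then one of $\overline{E_{i}}\cap K$ is non-empty. Finally, to obtain $\beta(\mathcal{F}_{K})=K$, hence $\overline{K}=K=\beta(\mathcal{F}_{K})$ since $K$ is closed: for $p\in K$ and any $E\in p$ we have $p\in\overline{E}\cap K\neq\emptyset$, so $p\subseteq\mathcal{F}_{K}$; conversely if $p\notin K$ then, $K$ being closed, some basic open set $\overline{E}\ni p$ misses $K$, whence $E\in p$ while $\overline{E}\cap K=\emptyset$, so $E\notin\mathcal{F}_{K}$ and $p\notin\beta(\mathcal{F}_{K})$. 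This closes the circle.
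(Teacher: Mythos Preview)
Your proof is correct. Note, however, that the paper does not actually prove this theorem: it is quoted without argument from \cite[Theorem 5.1.1]{key-4} (Christopherson's thesis), so there is no in-paper proof to compare against. What you supply is essentially the standard argument. The closedness of $\beta(\mathcal{F})$ and the identification $K=\beta(\mathcal{F}_K)$ use only the clopen basis and are routine; the real content, that every $A\in\mathcal{F}$ lies in some $p\in\beta(\mathcal{F})$, is the classical fact that a partition-regular family is the union of the ultrafilters it contains. The paper itself invokes this principle elsewhere via \cite[Theorem 3.11]{key-12}. Your direct Zorn construction (a maximal $\mathcal{G}\ni A$ with all finite intersections in $\mathcal{F}$, then using the Ramsey property to show $\mathcal{G}$ decides every subset) is one of the standard routes; an equally common variant first observes that $\mathcal{F}^{*}$ is a filter, notes that $\mathcal{F}^{*}\cup\{A\}$ has the finite intersection property, extends it to an ultrafilter $p$, and checks $p\subseteq\mathcal{F}$ by showing $B\notin\mathcal{F}$ forces $S\setminus B\in\mathcal{F}^{*}\subseteq p$. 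Both approaches are equivalent in strength; yours has the minor advantage of not needing the $\mathcal{F}^{*}$ detour.
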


Let $S$ be a discrete semigroup, then for every family $\mathcal{F\subseteq P}\left(S\right)$
with the Ramsay property, $\beta\left(\mathcal{F}\right)\subseteq\beta S$
is closed. If $\beta\left(\mathcal{F}\right)$ be a subsemigroup of
$\beta S$, then $E\left(\beta\mathcal{F}\right)\neq\emptyset$. But
may not be subsemigroup. For example, let $\mathcal{F}=\mathcal{IP}$,
the family of IP- sets. It is easy to show that $\beta\left(\mathcal{F}\right)=\beta\left(\mathcal{IP}\right)=E\left(\beta S\right)$.
But $E\left(\beta S\right)$ is not a subsemigroup of $\beta S$.
\begin{defn}
Let $\mathcal{F}$ be a family with Ramsay property such that $\beta(\mathcal{F})$
is a subsemigroup of $\beta S$ and $p$ be an idempotent in $\beta(\mathcal{F})$,
then each member of $p$ is called essential $\mathcal{F}$-set. And
$A\subset S$ is called essential $\mathcal{F}^{\star}$-set if $A$
intersects with all essential $\mathcal{F}$-sets. That is a set is
essential $\mathcal{F}^{\star}$ if and only if $A\in p$, for every
$p\in E\left(\beta(\mathcal{F})\right).$
\end{defn}

The family $\mathcal{F}$ is called left (right) shift-invariant if
for all $s\in S$ and all $E\in\mathcal{F}$ one has $sE\in\mathcal{F}(Es\in\mathcal{F})$.
The family $\mathcal{F}$ is called left (right) inverse shift-invariant
if for all $s\in S$ and all $E\in\mathcal{F}$ one has $s^{-1}E\in\mathcal{F}(Es^{-1}\in\mathcal{F})$.
From\cite[Theorem 5.1.2]{key-4} we have the following one:
\begin{thm}
If $\mathcal{F}$ is a family having the Ramsey property then $\beta\mathcal{F}\subseteq\beta S$
is a left ideal if and only if $\mathcal{F}$ is left shift-invariant.
Similarly, $\beta\mathcal{F}\subseteq\beta S$ is a right ideal if
and only if $\mathcal{F}$ is right shift-invariant.
\end{thm}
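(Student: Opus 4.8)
The plan is to prove the left-hand equivalence in full and to obtain the right-hand one by the symmetric argument. Throughout I would use two facts recorded earlier: the extension product satisfies $A\in p\cdot q$ iff $\{x\in S:x^{-1}A\in q\}\in p$, and, by the first cited theorem, $\beta\mathcal{F}$ is closed with $\mathcal{F}=\bigcup\beta(\mathcal{F})$, so in particular $\beta(\mathcal{F})\neq\emptyset$ whenever $\mathcal{F}\neq\emptyset$; this last point is needed since an ideal must be nonempty. I would also use that each $s\in S$ lies in the topological center, so the principal ultrafilter $e(s)$ multiplies continuously and $A\in s\cdot p$ iff $s^{-1}A\in p$.

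For the forward direction I assume $\mathcal{F}$ is left shift-invariant and take $q\in\beta S$ and $p\in\beta(\mathcal{F})$; I must show $q\cdot p\in\beta(\mathcal{F})$, i.e. every $A\in q\cdot p$ lies in $\mathcal{F}$. By the product formula the set $\{x:x^{-1}A\in p\}$ belongs to $q$, hence is nonempty, so I may fix one $x_{0}$ with $x_{0}^{-1}A\in p$. Since $p\subseteq\mathcal{F}$ this gives $x_{0}^{-1}A\in\mathcal{F}$; left shift-invariance then yields $x_{0}\cdot(x_{0}^{-1}A)\in\mathcal{F}$, and since $x_{0}\cdot(x_{0}^{-1}A)\subseteq A$, upward heredity of the family forces $A\in\mathcal{F}$. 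Thus $q\cdot p\subseteq\mathcal{F}$ and $\beta(\mathcal{F})$ is a left ideal.

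For the converse I assume $\beta(\mathcal{F})$ is a left ideal and fix $E\in\mathcal{F}$ and $s\in S$; I want $sE\in\mathcal{F}$. Using $\mathcal{F}=\bigcup\beta(\mathcal{F})$ I pick $p\in\beta(\mathcal{F})$ with $E\in p$. Because $e(s)\in\beta S$ and $\beta(\mathcal{F})$ is a left ideal, $s\cdot p\in\beta(\mathcal{F})$, so $s\cdot p\subseteq\mathcal{F}$. It then suffices to check $sE\in s\cdot p$: by the computation above this amounts to $s^{-1}(sE)\in p$, and since $E\subseteq s^{-1}(sE)$ and $E\in p$, the filter property of $p$ gives exactly this. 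Hence $sE\in\mathcal{F}$ and $\mathcal{F}$ is left shift-invariant.

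The right-hand statements are obtained by interchanging left and right throughout, together with the dual identity $A\in p\cdot e(s)$ iff $As^{-1}\in p$; the converse direction (right ideal implies right shift-invariant) mirrors the paragraph above, using $E\subseteq(Es)s^{-1}$ in place of $E\subseteq s^{-1}(sE)$. The main obstacle I expect is the forward right-hand direction. There the product formula extracts from $A\in p\cdot q$ only the projection $C=\{x:x^{-1}A\in q\}\in\mathcal{F}$, rather than a single translate of $A$ as in the left case, so the clean one-step descent is not directly available; the asymmetry of the right-topological product is exactly what makes this delicate. The task is to descend from $C\in\mathcal{F}$ to $A\in\mathcal{F}$ using right shift-invariance and upward heredity, and carrying this step through carefully is the real content of the right-hand equivalence.
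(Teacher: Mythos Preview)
The paper does not supply its own proof of this theorem; it is merely quoted from \cite[Theorem 5.1.2]{key-4}, so there is no in-paper argument to compare your attempt against.

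As to correctness: your treatment of the left-hand equivalence is complete and clean. In the forward direction the key step $x_{0}(x_{0}^{-1}A)\subseteq A$ together with upward heredity is exactly right, and in the converse the use of $\mathcal{F}=\bigcup\beta(\mathcal{F})$ followed by $E\subseteq s^{-1}(sE)$ is the standard move. The right-hand \emph{converse} (right ideal $\Rightarrow$ right shift-invariant) does go through symmetrically via $A\in p\cdot e(s)\iff As^{-1}\in p$ and $E\subseteq (Es)s^{-1}$, as you indicate.

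However, your proposal is \emph{incomplete}: you explicitly do not prove the forward right-hand implication (right shift-invariance $\Rightarrow$ $\beta(\mathcal{F})$ is a right ideal), and you are right that the asymmetry of the right-topological product blocks the one-line descent used on the left. From $A\in p\cdot q$ you only obtain $C=\{x:x^{-1}A\in q\}\in\mathcal{F}$, and right shift-invariance gives $Cy\in\mathcal{F}$ for each $y$, but there is no single $y$ with $Cy\subseteq A$; one gets only that every finite $F\subseteq C$ admits some $y_{F}$ with $Fy_{F}\subseteq A$. That is not enough, by itself, to force $A\in\mathcal{F}$ from upward heredity and the Ramsey property. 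The argument you can salvage directly is the special case $q=e(s)\in S$: then $A\in p\cdot s$ gives $As^{-1}\in p\subseteq\mathcal{F}$, so $(As^{-1})s\in\mathcal{F}$ and $(As^{-1})s\subseteq A$ yields $A\in\mathcal{F}$. Extending this to arbitrary $q\in\beta S$ is precisely where continuity of $\lambda_{p}$ fails, so a genuinely different idea (or a look at the cited source) is needed to close that gap.
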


From \cite[Theorem 5.1.10]{key-4}, we can identify those families
$\mathcal{F}$ with Ramsey property for which $\beta\left(\mathcal{F}\right)$
is a subsemigroup of $\beta S$ . The condition is a rather technical
weakening of left shift-invariance.
\begin{thm}
Let $S$ be any semigroup, and let $\mathcal{F}$ be a family of subsets
of $S$ having the Ramsey property. Then the following are equivalent:
\begin{enumerate}
\item $\beta\left(\mathcal{F}\right)$ is a subsemigroup of $\beta S$.

\item $\mathcal{F}$ has the following property: If $E\subseteq S$
is any set, and if there is $A\in\mathcal{F}$ such that for all finite
$H\subseteq A$ one has $\left(\cap_{q\in H}x^{-1}E\right)\in\mathcal{F}$,
then $E\in\mathcal{F}$.
\end{enumerate}
\end{thm}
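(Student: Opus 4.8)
The plan is to prove the two implications separately, using two facts already available: the operation on $\beta S$ satisfies $E\in p\cdot q$ iff $\{x\in S:x^{-1}E\in q\}\in p$, and (from the first displayed theorem) $\mathcal{F}=\bigcup\beta(\mathcal{F})$ together with the fact that $\beta(\mathcal{F})$ is a closed, hence compact, subset of $\beta S$. I read the condition in (2) as: if there is $A\in\mathcal{F}$ with $\bigcap_{y\in H}y^{-1}E\in\mathcal{F}$ for every finite $H\subseteq A$, then $E\in\mathcal{F}$ (the printed $x^{-1}E$ being a typo for the bound index, $y^{-1}E$).

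For the implication $(2)\Rightarrow(1)$, which I expect to be routine, I would fix $p,q\in\beta(\mathcal{F})$ and an arbitrary $E\in p\cdot q$, and show $E\in\mathcal{F}$; since $E$ was arbitrary this gives $p\cdot q\subseteq\mathcal{F}$, i.e. $p\cdot q\in\beta(\mathcal{F})$. Set $A=\{x\in S:x^{-1}E\in q\}$. By the formula for the product, $E\in p\cdot q$ means $A\in p$, and since $p\in\beta(\mathcal{F})$ we get $A\in\mathcal{F}$. For any finite $H\subseteq A$ each $y^{-1}E$ with $y\in H$ lies in $q$, so the finite intersection $\bigcap_{y\in H}y^{-1}E$ lies in $q$, and since $q\in\beta(\mathcal{F})$ it lies in $\mathcal{F}$. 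Thus $A$ witnesses the hypothesis of (2), and (2) yields $E\in\mathcal{F}$.

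For the implication $(1)\Rightarrow(2)$, which carries the real content, suppose $A\in\mathcal{F}$ is as in (2). The goal is to realize $E$ inside some ultrafilter of $\beta(\mathcal{F})$, so that $\mathcal{F}=\bigcup\beta(\mathcal{F})$ forces $E\in\mathcal{F}$. First I would build $q\in\beta(\mathcal{F})$ with $y^{-1}E\in q$ for every $y\in A$. Consider, for each finite $H\subseteq A$, the closed set $\overline{\bigcap_{y\in H}y^{-1}E}\cap\beta(\mathcal{F})$. These sets have the finite intersection property: the intersection over $H_1,\dots,H_n$ contains $\overline{C}\cap\beta(\mathcal{F})$ with $C=\bigcap_{y\in H_1\cup\cdots\cup H_n}y^{-1}E$, and since $C\in\mathcal{F}=\bigcup\beta(\mathcal{F})$ there is an ultrafilter in $\beta(\mathcal{F})$ containing $C$, hence lying in every $\overline{\bigcap_{y\in H_i}y^{-1}E}$. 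By compactness of $\beta(\mathcal{F})$ the total intersection is nonempty; any $q$ in it satisfies $y^{-1}E\in q$ for all $y\in A$. Next, since $A\in\mathcal{F}=\bigcup\beta(\mathcal{F})$, pick $p\in\beta(\mathcal{F})$ with $A\in p$. Then $A\subseteq\{x:x^{-1}E\in q\}$, so $\{x:x^{-1}E\in q\}\in p$, i.e. $E\in p\cdot q$. Since (1) says $\beta(\mathcal{F})$ is a subsemigroup, $p\cdot q\in\beta(\mathcal{F})$, whence $E\in p\cdot q\subseteq\mathcal{F}$.

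The main obstacle is the construction of $q$ in the hard direction: one must produce a single ultrafilter, all of whose members lie in $\mathcal{F}$, that simultaneously contains every $y^{-1}E$. This is exactly where the compactness of $\beta(\mathcal{F})$ and the identity $\mathcal{F}=\bigcup\beta(\mathcal{F})$ are indispensable; the Ramsey property enters only through these facts, since it is what guarantees that $\beta(\mathcal{F})$ is closed and that $\mathcal{F}$ is recovered as its union. A secondary point to check is that the finite intersections $\bigcap_{y\in H}y^{-1}E$ are genuinely members of $\mathcal{F}$ rather than merely nonempty; this is supplied verbatim by the hypothesis in (2), which is precisely why the condition is phrased in terms of $\mathcal{F}$-membership of those intersections.
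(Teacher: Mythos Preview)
Your proof is correct. Both directions are handled cleanly: in $(2)\Rightarrow(1)$ you correctly use the product formula and the fact that ultrafilters are closed under finite intersections to verify the hypothesis of (2); in $(1)\Rightarrow(2)$ the compactness argument producing $q\in\beta(\mathcal{F})$ with $y^{-1}E\in q$ for all $y\in A$ is the right idea, and the conclusion via $E\in p\cdot q\subseteq\mathcal{F}$ is sound.

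However, there is nothing to compare against: the paper does not prove this theorem. It is quoted verbatim from \cite[Theorem 5.1.10]{key-4} (Christopherson's thesis) as background, with no proof supplied here. So while your argument stands on its own and is essentially the natural one, the paper itself offers no proof for you to match or diverge from.
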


Let us abbreviate the family of infinite sets as $\mathcal{IF}$,
the family of piecewise syndetic sets as $\mathcal{PS}$, the family
of positive upper asymptotic density as $\Delta$, the family sets
containing arithmetic progression of arbitrary length as $\mathcal{AP}$,
the family of sets with the property $\sum_{n\in A}\frac{1}{n}=\infty$
as $\mathcal{HSD}$ and the family of $J$-sets as $\mathcal{J}$.

From the above definition together with the abbreviations, we get
quasi central set is an essential $\mathcal{PS}$-set, $D$-set is
an essential $\Delta$-set and $C$-set is an essential $\mathcal{J}$-set.

\section{Elementary characterization of essential $\mathcal{F}$-sets}

In \cite[Theorem 5.2.3]{key-4}, the author has established dynamical
characterization of essential $\mathcal{F}$-sets. But the elementary
characterization of essential $\mathcal{F}$-sets are still unknown.
Although elementary characterization of quasi central-sets and $C$-sets
are known from \cite[Theorem 3.7]{key-10} and \cite[Theorem 2.7]{key-11}
respectively. Since quasi central sets and $C$ sets comes from the
settings of essential $\mathcal{F}$-set and this fact confines the
fact that essential $\mathcal{F}$-sets might have elementary characterization.
In this section we will prove the supposition that elementary characterization
of essential $\mathcal{F}$-sets could be found exactly the same way
what the authors did in \cite{key-11} for $C$-sets.

Let $\omega$ be the first infinite ordinal and each ordinal indicates
the set of all it's predecessor. In particular, $0=\emptyset,$ for
each $n\in\mathbb{N},\:n=\left\{ 0,1,...,n-1\right\} $.
\begin{defn}
Let us recall the following definitions from \cite[Definition 2.5]{key-11}.
\begin{enumerate}
\item If $f$ is a function and $dom\left(f\right)=n\in\omega$, then for
all $x$, $f^{\frown}x=f\cup\left\{ \left(n,x\right)\right\} $.
\item Let $T$ be a set functions whose domains are members of $\omega$.
For each $f\in T$, $B_{f}\left(T\right)=\left\{ x:f^{\frown}x\in T\right\} .$
\end{enumerate}
We recall the following lemma from \cite[Lemma 2.6]{key-11} which
is key to our characterization of essential $\mathcal{F}$-sets.
\end{defn}

\begin{lem}
\label{F lemma}Let $p\in\beta S$. Then $p$ is an idempotent if
and only if for each $A\in p$ there is a non-empty set $T$ of functions
such that
\begin{enumerate}
\item For all $f\in T$, $dom\left(f\right)\in\omega$ and $range\left(f\right)\subseteq A$.
\item For all $f\in T$, $B_{f}\left(T\right)\in p$.
\item For all $f\in T$ and any $x\in B_{f}\left(T\right)$, $B_{f^{\frown}x}\left(T\right)\subseteq x^{-1}B_{f}\left(T\right)$.
\end{enumerate}
\end{lem}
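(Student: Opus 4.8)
The plan is to reduce both implications to a single auxiliary fact about idempotents and to organize the construction of $T$ as a recursion on $\omega$. The auxiliary fact is: if $p=p\cdot p$ and $C\in p$, then, writing $C^\star=\{x\in C:x^{-1}C\in p\}$, one has $C^\star\in p$ and, for every $x\in C^\star$, $x^{-1}C^\star\in p$. The first assertion is immediate, since idempotency means $C\in p\Leftrightarrow\{x:x^{-1}C\in p\}\in p$, so $C^\star=C\cap\{x:x^{-1}C\in p\}$ is an intersection of two members of $p$. For the second I would compute $x^{-1}C^\star=(x^{-1}C)\cap\{y:y^{-1}(x^{-1}C)\in p\}$; since $x\in C^\star$ gives $x^{-1}C\in p$, applying idempotency to $x^{-1}C$ places the second set in $p$, whence $x^{-1}C^\star\in p$. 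This lemma is the engine of the whole argument.

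For the easier ($\Leftarrow$) direction, assume every $A\in p$ admits such a tree and fix $A\in p$ with its tree $T$. Pick any $f\in T$, which exists since $T\neq\emptyset$. By (2), $B_f(T)\in p$, and condition (1) forces $B_f(T)\subseteq A$: if $x\in B_f(T)$ then $f^\frown x\in T$, so $x\in\mathrm{range}(f^\frown x)\subseteq A$. For such $x$, condition (3) gives $B_{f^\frown x}(T)\subseteq x^{-1}B_f(T)\subseteq x^{-1}A$, and $B_{f^\frown x}(T)\in p$ by (2); hence $x^{-1}A\in p$. Thus $B_f(T)\subseteq\{x:x^{-1}A\in p\}$, so the latter set lies in $p$, i.e. $A\in p\cdot p$. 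As $A\in p$ was arbitrary, $p\subseteq p\cdot p$, and since both are ultrafilters $p=p\cdot p$. Note that this direction consumes only one node of the tree.

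For the ($\Rightarrow$) direction, assume $p$ is idempotent and fix $A\in p$. I would build $T$ recursively while maintaining the invariant that for every $f\in T$ the set $B_f(T)$ satisfies $B_f(T)\in p$, $B_f(T)\subseteq A$, and $x^{-1}B_f(T)\in p$ for all $x\in B_f(T)$ (that is, $B_f(T)$ is self-starred). I start by putting the empty function $\emptyset$ into $T$ and declaring $B_\emptyset(T)=A^\star$; the auxiliary lemma supplies exactly the three invariant properties. Recursively, whenever $f\in T$ and $x\in B_f(T)$, put $f^\frown x\in T$ and set
\[
B_{f^\frown x}(T)=\big(B_f(T)\cap x^{-1}B_f(T)\big)^\star .
\]
Here $B_f(T)\cap x^{-1}B_f(T)\in p$ because $B_f(T)\in p$ and $x^{-1}B_f(T)\in p$ by the invariant, so the auxiliary lemma again yields that $B_{f^\frown x}(T)$ belongs to $p$, is self-starred, and is contained in $B_f(T)\cap x^{-1}B_f(T)\subseteq A$; this preserves the invariant and delivers (1) and (2). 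Condition (3) holds by construction, since $B_{f^\frown x}(T)\subseteq x^{-1}B_f(T)$. Because the declared set is precisely $\{x:f^\frown x\in T\}$ at each stage, the recursion on domain size is well defined.

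I expect the only genuine obstacle to be the auxiliary star lemma, specifically the verification that $x^{-1}C^\star\in p$ for $x\in C^\star$; the identity $x^{-1}(C^\star)=(x^{-1}C)\cap\{y:y^{-1}(x^{-1}C)\in p\}$ must be checked carefully against the definition of $C^\star$ and of the product ultrafilter $p\cdot p$. Once that is in place, both directions are bookkeeping: the reverse direction needs only a single node, and the forward direction is a routine recursion whose invariant is tailored so that the star lemma applies verbatim at every step.
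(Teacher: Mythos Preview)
The paper does not actually prove this lemma: it is merely \emph{recalled} from \cite[Lemma 2.6]{key-11}, so there is no proof in the paper to compare against. Your argument is correct and is in fact the standard proof one finds in that reference. The star lemma you isolate (that $C^\star\in p$ and $x^{-1}C^\star\in p$ for $x\in C^\star$) is precisely \cite[Lemma 4.14]{key-12}, and your identity $x^{-1}C^\star=(x^{-1}C)\cap\{y:y^{-1}(x^{-1}C)\in p\}$ is the right computation, using $(xy)^{-1}C=y^{-1}(x^{-1}C)$.

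Two small remarks on presentation. In the $(\Leftarrow)$ direction you silently use that $f^\frown x\in T$ before invoking (2) for $f^\frown x$; this is fine since $x\in B_f(T)$ exactly says $f^\frown x\in T$, but it is worth making explicit. In the $(\Rightarrow)$ direction, your recursion simultaneously defines $T$ and the sets $B_f(T)$; strictly speaking one should phrase it as building $T$ level by level (functions with domain $n$, then $n+1$, \dots) and then \emph{verifying} that for the resulting $T$ the set $\{x:f^\frown x\in T\}$ coincides with the set you declared at the recursive step. You acknowledge this in your last clause, and the verification is immediate, but a referee might ask you to spell it out.
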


The following theorem is the characterization of essential $\mathcal{F}$-
set.
\begin{thm}
\label{thm}Let $\left(S,\cdot\right)$ be a semigroup, and assume
that $\mathcal{F}$ is a family of subsets of $S$ with the Ramsay
property such that $\beta\left(\mathcal{F}\right)$ is a subsemigroup
of $\beta S$. Let $A\subseteq S$. Statements $(1)$, $(2)$ and
$(3)$ are equivalent and are implied by statement $(4)$. If $S$
is countable, then all the five statements are equivalent.
\begin{enumerate}
\item $A$ is an essential $\mathcal{F}$-set.
\item There is a non empty set $T$ of functions such that:
\begin{enumerate}
\item For all $f\in T$, $\text{domain}\left(f\right)\in\omega$ and $rang\left(f\right)\subseteq A$.
\item For all $f\in T$ and all $x\in B_{f}\left(T\right)$, $B_{f^{\frown}x}\subseteq x^{-1}B_{f}\left(T\right)$.
\item For all $F\in\mathcal{P}_{f}\left(T\right)$, $\cap_{f\in F}B_{f}(T)$
is a $\mathcal{F}$-set.
\end{enumerate}
\item \label{3chain} There is a downward directed family $\left\langle C_{F}\right\rangle _{F\in I}$
of subsets of $A$ such that:
\begin{enumerate}
\item For each $F\in I$ and each $x\in C_{F}$ there exists $G\in I$ with
$C_{G}\subseteq x^{-1}C_{F}$.
\item For each $\mathcal{F}\in\mathcal{P}_{f}\left(I\right),\,\bigcap_{F\in\mathcal{F}}C_{F}$
is a $\mathcal{F}$-set.
\end{enumerate}
\item \label{chain} There is a decreasing sequence $\left\langle C_{n}\right\rangle _{n=1}^{\infty}$
of subsets of $A$ such that
\begin{enumerate}
\item For each $n\in\mathbb{N}$ and each $x\in C_{n}$, there exists $m\in\mathbb{N}$
with $C_{m}\subseteq x^{-1}C_{n}$.
\item For each $n\in\mathbb{N}$, $C_{n}$ is a $\mathcal{F}$-set.
\end{enumerate}
\end{enumerate}
\end{thm}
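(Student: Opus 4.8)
\section*{Proof proposal}

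The plan is to run the cycle $(1)\Rightarrow(2)\Rightarrow(3)\Rightarrow(1)$ to get the equivalence of the first three statements, then to dispatch $(4)\Rightarrow(3)$ almost for free, and finally to close the loop with $(1)\Rightarrow(4)$ under the countability hypothesis. The engine for the first two implications is Lemma~\ref{F lemma}, while the return trip $(3)\Rightarrow(1)$ manufactures an idempotent inside $\beta(\mathcal{F})$ via Ellis's theorem; this last construction is where the hypothesis that $\beta(\mathcal{F})$ is a subsemigroup is used decisively.

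For $(1)\Rightarrow(2)$ I would fix an idempotent $p\in\beta(\mathcal{F})$ with $A\in p$ and feed $A$ into Lemma~\ref{F lemma}, obtaining a tree $T$ with $\mathrm{range}(f)\subseteq A$, $B_f(T)\in p$, and $B_{f^{\frown}x}(T)\subseteq x^{-1}B_f(T)$. Conditions (2a), (2b) are then immediate, and (2c) holds because any finite intersection $\bigcap_{f\in F}B_f(T)$ lies in the ultrafilter $p$ and hence in $\mathcal{F}$, every member of $p\in\beta(\mathcal{F})$ being an $\mathcal{F}$-set. The one point worth recording is that each $B_f(T)\subseteq A$: if $x\in B_f(T)$ then $f^{\frown}x\in T$, so $x\in\mathrm{range}(f^{\frown}x)\subseteq A$. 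For $(2)\Rightarrow(3)$ I would let $I$ be the nonempty finite subsets of $T$ and set $C_F=\bigcap_{f\in F}B_f(T)\subseteq A$. These are downward directed since $C_{F_1\cup F_2}\subseteq C_{F_1}\cap C_{F_2}$; condition (3b) is exactly (2c); and for (3a), given $x\in C_F$ one has $f^{\frown}x\in T$ for every $f\in F$, so $G=\{f^{\frown}x:f\in F\}$ satisfies $C_G\subseteq\bigcap_{f\in F}x^{-1}B_f(T)=x^{-1}C_F$ by (2b).

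The heart of the matter is $(3)\Rightarrow(1)$. I would put $E=\beta(\mathcal{F})\cap\bigcap_{F\in I}\overline{C_F}$. Since the family is downward directed and each finite intersection of the $C_F$ is an $\mathcal{F}$-set by (3b), every finite subfamily is met by some $\mathcal{F}$-ultrafilter (using $\mathcal{F}=\bigcup\beta(\mathcal{F})$ from the first displayed theorem), so $E$ enjoys the finite intersection property and is nonempty by compactness of the closed set $\beta(\mathcal{F})$. The decisive step is that $E$ is a subsemigroup: for $p,q\in E$ and any $F$, condition (3a) supplies, for each $x\in C_F$, some $G$ with $C_G\subseteq x^{-1}C_F$; as $C_G\in q$ this forces $x^{-1}C_F\in q$, whence $C_F\subseteq\{x:x^{-1}C_F\in q\}\in p$ and therefore $C_F\in p\cdot q$; since $\beta(\mathcal{F})$ is itself a subsemigroup we get $p\cdot q\in E$. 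Ellis's theorem then yields an idempotent $p\in E\subseteq\beta(\mathcal{F})$, and because $C_F\subseteq A$ we conclude $A\in p$, so $A$ is an essential $\mathcal{F}$-set.

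Finally, $(4)\Rightarrow(3)$ is immediate, a decreasing sequence being a downward directed family indexed by $\mathbb{N}$ with $\bigcap_{n\in\mathcal{F}}C_n=C_{\max\mathcal{F}}$. For $(1)\Rightarrow(4)$ when $S$ is countable I would enumerate $S=\{s_i:i\in\mathbb{N}\}$ and build a decreasing sequence of star-sets relative to the idempotent $p$ of (1), writing $B^{\star}=\{x\in B:x^{-1}B\in p\}$ and recalling that $B^{\star}\in p$ and $x^{-1}B^{\star}\in p$ for $x\in B^{\star}$. Starting from $C_1=A^{\star}$ and setting $C_{n+1}=\bigl(C_n\cap\bigcap\{s_i^{-1}C_j:i,j\le n,\ s_i\in C_j\}\bigr)^{\star}$, a finite intersection of members of $p$, each $C_n\in p$ lies in $\mathcal{F}$ and in $A$, which gives (4b); for (4a), given $x=s_i\in C_n$ one takes $m=\max(i,n)+1$, the stage at which the factor $s_i^{-1}C_n$ is absorbed, so that $C_m\subseteq x^{-1}C_n$. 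I expect the two genuine obstacles to be the subsemigroup verification in $(3)\Rightarrow(1)$, where one must check that the product ultrafilter still contains every $C_F$ while staying inside $\beta(\mathcal{F})$, and the bookkeeping in $(1)\Rightarrow(4)$, where countability is precisely what allows a single decreasing sequence to absorb all the shifts $s_i^{-1}C_j$ that the directed family of (3) distributes across separate indices.
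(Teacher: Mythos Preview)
Your cycle $(1)\Rightarrow(2)\Rightarrow(3)\Rightarrow(1)$ and the observation $(4)\Rightarrow(3)$ match the paper's argument essentially line for line; your $(3)\Rightarrow(1)$ unpacks by hand what the paper gets by citing \cite[Theorem 4.20]{key-12} (that $\bigcap_{F\in I}\overline{C_F}$ is a subsemigroup) and \cite[Theorem 3.11]{key-12} (for nonemptiness of the intersection with $\beta(\mathcal{F})$), which is a perfectly good and more self-contained substitute.

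The one genuine divergence is how you close the loop in the countable case. The paper does \emph{not} prove $(1)\Rightarrow(4)$; it proves $(2)\Rightarrow(4)$, exploiting the fact that when $S$ is countable the tree $T$ already produced in step~(2) is itself countable (being a set of finite sequences in $S$). Enumerating $T=\{f_n:n\in\mathbb{N}\}$ and setting $C_n=\bigcap_{k\le n}B_{f_k}(T)$ then gives the decreasing chain immediately: for $x\in C_n$ each $f_k^{\frown}x$ is again in $T$, hence equals some $f_j$ with $j\le m$ for a suitable $m$, and $C_m\subseteq x^{-1}C_n$ by (2b). Your route instead goes back to the idempotent $p$, enumerates $S$, and manufactures the chain via iterated star-sets $C_{n+1}=\bigl(C_n\cap\bigcap_{i,j\le n,\,s_i\in C_j}s_i^{-1}C_j\bigr)^{\star}$. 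This is correct (each $C_n$ is a star-set, so $s_i\in C_j$ forces $s_i^{-1}C_j\in p$, and the bookkeeping with $m=\max(i,n)+1$ works), but it is heavier: you are rebuilding from scratch a structure that condition~(2) already hands you for free. The paper's version is shorter and makes clearer that countability of $S$ enters only through countability of the tree $T$.
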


\begin{proof}
$(1)$ $\Rightarrow$ $(2)$ As $A$ be an essential $\mathcal{F}$
set, then there exists an idempotent $p\in\beta\left(\mathcal{F}\right)$
such that $A\in p$. Pick a set $T$ of functions as guaranteed by\textbf{
}Lemma \ref{F lemma}. Conclusions $(a)$ and $(b)$ hold directly.
Given $F\in\mathcal{P}_{f}\left(T\right)$, $B_{f}\in p$ for all
$f\in F$, hence $\bigcap_{f\in F}B_{f}\in p$ and so $\bigcap_{f\in F}B_{f}$
is a $\mathcal{F}$-set.\vspace{0.1in}

$(2)\Rightarrow(3)$ Let $T$ be guaranteed by $(2)$. Let $I=\mathcal{P}_{f}\left(T\right)$
and for each $F\in I$, let $C_{F}=\bigcap_{f\in F}B_{f}$. Then directly
each $C_{F}$ is a $\mathcal{F}$-set. Given $\mathcal{F}\in\mathcal{P}_{f}\left(I\right)$,
if $G=\bigcup\mathcal{F}$, then $\bigcap_{F\in\mathcal{F}}C_{F}=C_{G}$
and is therefore a $\mathcal{F}$-set. To verify $(a)$, let $F\in I$
and let $x\in C_{F}$. Let $G=\left\{ f^{\frown}x:f\in F\right\} $.
For each $f\in F$, $B_{f^{\frown}x}\subseteq x^{-1}B_{f}$ and so
$C_{G}\subseteq x^{-1}C_{F}$.\vspace{0.1in}

$(3)\Rightarrow(1)$ Let $\langle C_{F}\rangle$ is guaranteed by
(3). Let $M=\bigcap_{F\in I}\overline{C_{F}}$. By\cite[Theorem 4.20]{key-12},
$M$ is a subsemigroup of $\beta S$. By\cite[Theorem 3.11]{key-12}
there is some $p\in\beta S$ such that $\left\{ C_{F}:F\in I\right\} \subseteq p\subseteq\mathcal{F}$.
Therefore $M\cap\beta\left(\mathcal{F}\right)\neq\emptyset$; and
so $M\cap\beta\left(\mathcal{F}\right)$ is a compact subsemigroup
of $\beta S$. Thus there is an idempotent $p\in M\cap\beta\left(\mathcal{F}\right)$,
and so  $A$ is an essential $\mathcal{F}$-set. \vspace{0.1in}

It is trivial that $(4)\Rightarrow(3)$. Assume now that $S$ is countable.
We shall show that $(2)\Rightarrow(4)$. So let $T$ be as guaranteed
by $(2)$. Then $T$ is countable so enumerate $T$ as $\left\{ f_{n}:n\in\mathbb{N}\right\} $.
For $n\in\mathbb{N}$, let $C_{n}=\bigcap_{k=1}^{n}B_{f_{k}}$. Then
each $C_{n}$ is a $\mathcal{F}$-set. Let $n\in\mathbb{N}$ and let
$x\in C_{n}$. Pick $m\in\mathbb{N}$ such that 
\[
\left\{ f_{k}^{\frown}x:k\in\left\{ 1,2,\ldots,n\right\} \right\} \subseteq\left\{ f_{1},f_{2},\ldots,f_{m}\right\} .
\]
 Then $C_{m}\subseteq x^{-1}C_{n}$.
\end{proof}
\noindent Now, we will conclude this section with the following fascinating
results:
\begin{cor}
Let $\left(S,\cdot\right)$ be a countable discrete semigroup and
let $\mathcal{F}$ be a left inverse shift invariant family with Ramsay
property such that $\beta\left(\mathcal{F}\right)$ is subsemigroup
of $\beta S$. If there exists a sequence $\langle x_{n}\rangle_{n=1}^{\infty}$
in $S$ such that $FP\left(\langle x_{n}\rangle_{n=1}^{\infty}\right)$
is $\mathcal{F}$-set, then $FP\left(\langle x_{n}\rangle_{n=1}^{\infty}\right)$
is essential $\mathcal{F}$-set.
\end{cor}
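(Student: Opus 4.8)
The plan is to reduce the statement to condition $(4)$ of Theorem~\ref{thm}, since the chain $(4)\Rightarrow(3)\Rightarrow(1)$ is exactly designed to manufacture an idempotent of $\beta(\mathcal F)$ containing the given set. Write $A=FP\bigl(\langle x_n\rangle_{n=1}^{\infty}\bigr)$ and, for each $n\ge 1$, set
\[
C_n=FP\bigl(\langle x_k\rangle_{k=n}^{\infty}\bigr).
\]
Then $C_1=A$, the sequence $\langle C_n\rangle_{n=1}^{\infty}$ is decreasing, and every $C_n\subseteq A$. First I would verify the shift condition $(4)(a)$: given $n$ and $x\in C_n$, write $x=\prod_{j\in H}x_j$ with $H\subseteq\{n,n+1,\dots\}$ finite, and put $m=\max H+1$; for any $y\in C_m$ all indices occurring in $y$ exceed those occurring in $x$, so the concatenated product $x\cdot y$ again lies in $C_n$, whence $C_m\subseteq x^{-1}C_n$. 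This step is purely combinatorial and uses only that products are taken in increasing order; no hypothesis on $\mathcal F$ is needed.

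The heart of the argument is condition $(4)(b)$, namely that each tail $C_n$ is an $\mathcal F$-set, and I would prove this by induction on $n$. The base case $C_1=A\in\mathcal F$ is the hypothesis. For the inductive step I would use the decomposition
\[
C_n=C_{n+1}\cup\{x_n\}\cup x_nC_{n+1},
\]
valid because an element of $C_n$ either omits the index $n$, landing in $C_{n+1}$, or has $x_n$ as its leftmost factor. Since $\mathcal F$ has the Ramsey property and $C_n\in\mathcal F$, one of the two pieces $C_{n+1}$ or $\{x_n\}\cup x_nC_{n+1}$ belongs to $\mathcal F$; if the first does, the step is complete. If instead the second does, a further application of the Ramsey property together with the fact that a singleton cannot be an $\mathcal F$-set (for the families of interest $\mathcal F$ contains only infinite sets) yields $x_nC_{n+1}\in\mathcal F$, and then left inverse shift invariance gives $x_n^{-1}\bigl(x_nC_{n+1}\bigr)\in\mathcal F$. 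The remaining task is to deduce $C_{n+1}\in\mathcal F$ from this.

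Finally, with $(4)(a)$ and $(4)(b)$ established, the implication $(4)\Rightarrow(3)\Rightarrow(1)$ of Theorem~\ref{thm} produces an idempotent $p\in\bigl(\bigcap_{n}\overline{C_n}\bigr)\cap\beta(\mathcal F)$ with $A=C_1\in p$, so that $A$ is an essential $\mathcal F$-set, as required. I expect the main obstacle to be precisely the peeling step at the end of the induction: extracting $C_{n+1}\in\mathcal F$ from the translate $x_nC_{n+1}\in\mathcal F$. The containment $C_{n+1}\subseteq x_n^{-1}\bigl(x_nC_{n+1}\bigr)$ is automatic, but the reverse inclusion requires controlling $\{y:x_ny\in x_nC_{n+1}\}$, which equals $C_{n+1}$ exactly only when $x_n$ is left cancellable. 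This is exactly where left inverse shift invariance is indispensable, and where the delicate point lies: one must argue that the excess $x_n^{-1}\bigl(x_nC_{n+1}\bigr)\setminus C_{n+1}$ is not itself an $\mathcal F$-set, so that a last application of the Ramsey property returns $C_{n+1}\in\mathcal F$. Making this excess genuinely negligible is the crux of the proof and is the step I would scrutinize most carefully.
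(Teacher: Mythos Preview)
Your approach coincides with the paper's: both set $C_n=FP(\langle x_k\rangle_{k\ge n}^{\infty})$, verify the shift condition $(4)(a)$ exactly as you do, and then invoke Theorem~\ref{thm}. The only cosmetic difference is that the paper establishes $C_m\in\mathcal F$ for an \emph{arbitrary} $m$ in one stroke, via the decomposition
\[
FP(\langle x_n\rangle_{n=1}^{\infty})
= FP(\langle x_n\rangle_{n=m}^{\infty})
\ \cup\ FP(\langle x_n\rangle_{n=1}^{m-1})
\ \cup\ \bigcup_{t\in FP(\langle x_n\rangle_{n=1}^{m-1})} t\,FP(\langle x_n\rangle_{n=m}^{\infty}),
\]
rather than by your one-step induction $C_n=C_{n+1}\cup\{x_n\}\cup x_nC_{n+1}$; in both versions one discards the finite piece and applies the Ramsey property to the remaining finitely many terms.

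As for the obstacle you single out at the peeling step---deducing $C_{m}\in\mathcal F$ from $t\,C_{m}\in\mathcal F$ via left inverse shift invariance when $t^{-1}(tC_{m})$ may properly contain $C_{m}$---the paper does not resolve it either: it simply writes ``$\mathcal F$ being left inverse shift invariant, in either case $FP(\langle x_n\rangle_{n=m}^{\infty})$ is $\mathcal F$-set'' and proceeds. So your proposal is already at least as detailed as the paper's argument on this point; your suspicion that the step is delicate in a non-left-cancellative semigroup is well founded, but the paper treats it as immediate.
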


\begin{proof}
Now choose arbitrarily $m\in\mathbb{N}$. Then
\[
FP\left(\langle x_{n}\rangle_{n=1}^{\infty}\right)=FP\left(\langle x_{n}\rangle_{n=m}^{\infty}\right)\cup FP\left(\langle x_{n}\rangle_{n=1}^{m-1}\right)
\]
\[
\qquad\qquad\qquad\qquad\qquad\;\;\;\cup\left\{ tFP\left(\langle x_{n}\rangle_{n=m}^{\infty}\right):t\in FP\left(\langle x_{n}\rangle_{n=1}^{m-1}\right)\right\} 
\]
\noindent As $\mathcal{F}$ is a Ramsay family, $FP\left(\langle x_{n}\rangle_{n=1}^{\infty}\right)$
is $\mathcal{F}$-set, and $FP\left(\langle x_{n}\rangle_{n=1}^{m-1}\right)$
is a finite set, we have either $FP\left(\langle x_{n}\rangle_{n=m}^{\infty}\right)\in\mathcal{F}$
or $tFP\left(\langle x_{n}\rangle_{n=m}^{\infty}\right)\in\mathcal{F}$
for some $t\in FP\left(\langle x_{n}\rangle_{n=1}^{m-1}\right)$.
Now $\mathcal{F}$ being left inverse shift invariant, in either cases
$FP\left(\langle x_{n}\rangle_{n=m}^{\infty}\right)$ is $\mathcal{F}$-
set. Let us consider the sequence 
\[
FP\left(\langle x_{n}\rangle_{n=1}^{\infty}\right)\supseteq FP\left(\langle x_{n}\rangle_{n=2}^{\infty}\right)\supseteq\cdots\supseteq FP\left(\langle x_{n}\rangle_{n=k}^{\infty}\right)\supseteq\cdots.
\]
 Let $C_{i}=FP\left(\langle x_{n}\rangle_{n=i+1}^{\infty}\right)$
for all $i\in\mathbb{N}$. Then for any $n\in\mathbb{N}$ , any $x\in C_{n}$
, we have $l\in\mathbb{N}$ such that $C_l\subseteq x^{-1}C_{n}$.
And this concludes that $FP\left(\left\{ x_{n}\right\} _{n=1}^{\infty}\right)$
is an essential $\mathcal{F}$-set from the previous theorem.
\end{proof}
\begin{cor}
If $S$ is a group and $G$ be a subgroup of $S$ which is a $\mathcal{F}$-
set, then it is an essential $\mathcal{F}$- set.
\end{cor}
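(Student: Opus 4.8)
The plan is to invoke the implication $(4)\Rightarrow(1)$ of Theorem~\ref{thm}, which, as recorded in the statement, holds for an arbitrary semigroup $S$ (countability is only needed to recover conditions $(2)$ and $(4)$ from the first three). So it suffices to produce a decreasing sequence $\langle C_n\rangle_{n=1}^{\infty}$ of subsets of $G$ satisfying the two clauses of statement $(4)$.

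The single algebraic fact I would exploit is that a subgroup is stable under the left-quotient operation: for every $x\in G$,
\[
x^{-1}G=\{y\in S:x\cdot y\in G\}=G,
\]
since $x\in G$ and $G$ is closed under both the group operation and inversion. With this identity in hand the construction becomes immediate, and in fact the \emph{constant} sequence already suffices.

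Concretely, I would set $C_n=G$ for every $n\in\mathbb{N}$. This sequence is (non-strictly) decreasing, and clause $(b)$ holds at once because $G$ is assumed to be a $\mathcal{F}$-set. For clause $(a)$, fix $n\in\mathbb{N}$ and $x\in C_n=G$; the displayed identity gives $x^{-1}C_n=x^{-1}G=G=C_n$, so taking $m=n$ yields $C_m\subseteq x^{-1}C_n$. Thus statement $(4)$ is verified, and Theorem~\ref{thm} then delivers statement $(1)$, namely that $G$ is an essential $\mathcal{F}$-set.

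There is no genuine obstacle here: the whole content is concentrated in the identity $x^{-1}G=G$, while the hypothesis that $G$ is itself a $\mathcal{F}$-set serves only to feed clause $(b)$. The one point I would keep explicit is that we operate under the standing assumption (inherited from Theorem~\ref{thm} and from the definition of essential $\mathcal{F}$-set) that $\beta(\mathcal{F})$ is a subsemigroup of $\beta S$; absent this, the very term ``essential $\mathcal{F}$-set'' is undefined.
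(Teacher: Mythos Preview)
Your proof is correct and mirrors the paper's own argument almost exactly: the paper also takes the constant chain $G\supseteq G\supseteq\cdots$ and appeals to Theorem~\ref{thm}. The only cosmetic difference is that the paper cites condition~(\ref{3chain}) (the downward-directed family) rather than condition~(\ref{chain}), but since the constant sequence trivially furnishes both, this is a distinction without a difference.
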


\begin{proof}
As
\[
G\supseteq G\supseteq\cdots\supseteq G\supseteq\cdots
\]
is the necessary chain of condition \ref{3chain} of theorem \ref{thm},
it follows.
\end{proof}

\section{Combined additive and multiplicative structure}

Given a sequence $\langle x_{n}\rangle_{n=1}^{\infty}$ in $\mathbb{N}$,
we say that $\langle y_{n}\rangle_{n=1}^{\infty}$ is a sum subsystem
of $\langle x_{n}\rangle_{n=1}^{\infty}$ provided there exists a
sequence $\langle H_{n}\rangle_{n=1}^{\infty}$ of non-empty finite
subset such that $\text{max}H_{n}<\text{min}H_{n+1}$ and $y_{n}=\sum_{t\in H_{n}}x_{t}$
for each $n\in\mathbb{N}$. In \cite{key-3} N. Hindman and V. Bergelson
proved the following theorem.
\begin{thm}
Let $\langle x_{n}\rangle_{n=1}^{\infty}$ be a sequence in $\mathbb{N}$
and $A$ be IP$^{\star}$-set in $\left(\mathbb{N},+\right)$. Then
there exists a subsystem $\langle y_{n}\rangle_{n=1}^{\infty}$ of
$\langle x_{n}\rangle_{n=1}^{\infty}$ such that $FS\left(\langle y_{n}\rangle_{n=1}^{\infty}\right)\cup FP\left(\langle y_{n}\rangle_{n=1}^{\infty}\right)\subseteq A$.
\end{thm}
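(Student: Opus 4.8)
The plan is to reduce this two–operation statement to a \emph{single} additive idempotent ultrafilter, exploiting the distributive interaction between $(\beta\mathbb{N},+)$ and $(\beta\mathbb{N},\cdot)$, and then to run a Galvin--Glazer type induction that forces each newly created sum and each newly created product into $A$ at the same time.

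First I would fix the ambient idempotent. Set $T=\bigcap_{m=1}^{\infty}\overline{FS\left(\langle x_{n}\rangle_{n=m}^{\infty}\right)}$. By the standard theory of $\beta\mathbb{N}$ (see \cite[Theorem 4.20]{key-12}), $T$ is a compact subsemigroup of $(\beta\mathbb{N},+)$, so it contains an additive idempotent $e$. Since $A$ is an IP$^{\star}$–set it lies in every additive idempotent, so $A\in e$; moreover every tail set $FS\left(\langle x_{n}\rangle_{n=m}^{\infty}\right)$ belongs to $e$ by the construction of $T$.

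The crucial point, and the step I expect to be the main obstacle, is to show that $A$ also governs \emph{products} through this one additive $e$. For $q\in\mathbb{N}$ the map $z\mapsto q\cdot z$ is an endomorphism of $(\mathbb{N},+)$, so its continuous extension gives the distributive law $q\cdot(x+y)=q\cdot x+q\cdot y$ for all $x,y\in\beta\mathbb{N}$. Consequently, if $f$ is any additive idempotent then so is $q\cdot f$, and since $A$ is IP$^{\star}$ we get $A\in q\cdot f$, i.e. $q^{-1}A=\{z:q\cdot z\in A\}$ lies in every additive idempotent. Thus each $q^{-1}A$ is again an IP$^{\star}$–set, and in particular $q^{-1}A\in e$. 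This lemma is what allows a purely additive idempotent to control the multiplicative side, and obtaining it is the heart of the argument; note that the additive shifts $-p+A=\{x:p+x\in A\}$ are \emph{not} IP$^{\star}$ in general, so the sum side cannot be handled this way and must instead be tracked by the weaker ``membership in $e$'' bookkeeping used below.

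Finally I would build the block sequence by induction. Suppose $y_{1},\dots,y_{n-1}$ have been chosen as block sums $y_{i}=\sum_{t\in H_{i}}x_{t}$ with $\max H_{i}<\min H_{i+1}$, maintaining the invariant that $-p+A\in e$ for every $p$ in $P=FS\left(\langle y_{i}\rangle_{i<n}\right)\cup\{0\}$ (the base case $P=\{0\}$ holds since $A\in e$). Writing $Q=FP\left(\langle y_{i}\rangle_{i<n}\right)\cup\{1\}$ and, for $B\in e$, $B^{\star}=\{x\in B:-x+B\in e\}$, I would consider
\[
D=\bigcap_{p\in P}\left(-p+A\right)^{\star}\cap\bigcap_{q\in Q}q^{-1}A\cap FS\left(\langle x_{t}\rangle_{t>\max H_{n-1}}\right).
\]
Each factor lies in $e$: the first by the maintained invariant together with the standard fact that $B^{\star}\in e$ (see \cite{key-12}), the second by the lemma of the previous paragraph, and the third because $e\in T$; hence $D\in e$ and in particular $D\neq\emptyset$. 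Any $y_{n}\in D$ is then a block sum with $\min H_{n}>\max H_{n-1}$, each new sum $p+y_{n}$ ($p\in P$) lies in $A$, each new product $q\cdot y_{n}$ ($q\in Q$) lies in $A$, and the choice $y_{n}\in\left(-p+A\right)^{\star}$ propagates the invariant $-p'+A\in e$ to the enlarged index set for the next stage. Iterating produces a sum subsystem $\langle y_{n}\rangle_{n=1}^{\infty}$ with $FS\left(\langle y_{n}\rangle_{n=1}^{\infty}\right)\cup FP\left(\langle y_{n}\rangle_{n=1}^{\infty}\right)\subseteq A$, as required.
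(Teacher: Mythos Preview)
The paper does not prove this theorem itself; it is stated there as a known result of Bergelson and Hindman \cite{key-3}, so there is no in-paper proof to compare against directly. That said, your argument is correct and is exactly the classical Bergelson--Hindman proof: pick an additive idempotent $e$ in $\bigcap_{m}\overline{FS(\langle x_{n}\rangle_{n\geq m})}$, observe that $q^{-1}A$ is again IP$^{\star}$ (since $q\cdot e$ is again an additive idempotent), and run a Galvin--Glazer induction intersecting the additive shifts $-p+A$ and the multiplicative shifts $q^{-1}A$ against the tail sets.

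This is also precisely the template the paper uses for its own generalizations (the essential $\mathcal{F}^{\star}$ version later in the section and the weak-ring version in Section~4): choose a suitable idempotent $p$, use $A^{\star}=\{n\in A:-n+A\in p\}$, and at stage $m$ intersect the tail set with $A^{\star}$, $\bigcap_{s\in E_{1}}(-s+A^{\star})$, and $\bigcap_{s\in E_{2}}s^{-1}A^{\star}$. Your bookkeeping differs only cosmetically---you carry the invariant ``$-p+A\in e$ for each partial sum $p$'' and take $(-p+A)^{\star}$ individually, whereas the paper packages everything through a single set $A^{\star}$ and records $FS,FP\subseteq A^{\star}$---but the content is identical.
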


In \cite[Theorem 2.4]{key-6}, it was proved that central$^{\star}$
sets also possess some IP$^{\star}$-set-like properties for some
specified sequences called minimal sequence\cite[Definition 2.4]{key-6}:
\begin{defn}
A sequence $\langle x_{n}\rangle_{n=1}^{\infty}$ in $\mathbb{N}$
is minimal sequence if
\[
\cap_{m=1}^{\infty}cl\left(FS\left(\langle x_{n}\rangle_{n=m}^{\infty}\right)\right)\cap K\left(\beta\mathbb{N}\right)\neq\emptyset.
\]
\end{defn}

It is known that $\langle2^{n}\rangle_{n=1}^{\infty}$ is a minimal
sequence while the sequence $\langle2^{2n}\rangle_{n=1}^{\infty}$
is not a minimal sequence. And in\cite[Theorem 2.4]{key-6}, it was
proved the following substantial multiplicative result of central$^{\star}$
sets.
\begin{thm}
Let $\langle x_{n}\rangle_{n=1}^{\infty}$ be a minimal sequence in
$\mathbb{N}$ and $A$ be central$^{\star}$ set in $\left(\mathbb{N},+\right)$.
Then there exists a subsystem $\langle y_{n}\rangle_{n=1}^{\infty}$
of $\langle x_{n}\rangle_{n=1}^{\infty}$ such that $FS\left(\langle y_{n}\rangle_{n=1}^{\infty}\right)\cup FP\left(\langle y_{n}\rangle_{n=1}^{\infty}\right)\subseteq A$.
\end{thm}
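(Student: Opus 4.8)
The plan is to manufacture a single minimal idempotent $p$ of $(\mathbb{N},+)$ that simultaneously sees the tails of the given sequence and the set $A$, and then to build the subsystem $\langle y_n\rangle_{n=1}^{\infty}$ by a Galvin--Glazer style induction in which membership in $A$ is forced for finite sums and finite products at the same time.

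First I would set $T=\bigcap_{m=1}^{\infty}\operatorname{cl}\big(FS(\langle x_n\rangle_{n=m}^{\infty})\big)$. This is a compact subsemigroup of $(\beta\mathbb{N},+)$, and the hypothesis that $\langle x_n\rangle_{n=1}^{\infty}$ is a minimal sequence says exactly that $T\cap K(\beta\mathbb{N})\neq\emptyset$. Since $T$ is a compact subsemigroup meeting $K(\beta\mathbb{N})$, one has $K(T)=T\cap K(\beta\mathbb{N})$, so $T$ contains a minimal idempotent $p$ of $\beta\mathbb{N}$. By construction $FS(\langle x_n\rangle_{n=m}^{\infty})\in p$ for every $m$, and because $A$ is central$^{\star}$ we get $A\in p$.

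The crucial point, and the step I expect to be the main obstacle, is to show that $P^{-1}A:=\{z\in\mathbb{N}:P\cdot z\in A\}$ lies in $p$ for every $P\in\mathbb{N}$. This is where central$^{\star}$ behaves differently from IP$^{\star}$: in the IP$^{\star}$ case one only needs the image of $p$ under an additive homomorphism to be an idempotent, which is automatic, whereas now I must preserve minimality. Let $\mu_P\colon\beta\mathbb{N}\to\beta\mathbb{N}$ be the continuous extension of $x\mapsto P\cdot x$; since $P\cdot(a+b)=P\cdot a+P\cdot b$, the map $\mu_P$ is a continuous homomorphism of $(\beta\mathbb{N},+)$ whose image is the compact subsemigroup $\operatorname{cl}(P\mathbb{N})$. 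Reduction modulo $P$ shows that every additive idempotent of $\beta\mathbb{N}$ lies in $\operatorname{cl}(P\mathbb{N})$, so this subsemigroup meets $K(\beta\mathbb{N})$, whence $K(\operatorname{cl}(P\mathbb{N}))=\operatorname{cl}(P\mathbb{N})\cap K(\beta\mathbb{N})$; combined with the fact that a continuous surjective homomorphism carries the smallest ideal onto the smallest ideal, I obtain $\mu_P(p)\in K(\operatorname{cl}(P\mathbb{N}))\subseteq K(\beta\mathbb{N})$. Thus $\mu_P(p)$ is again a minimal idempotent, so $A\in\mu_P(p)$ by central$^{\star}$-ness, which is precisely $P^{-1}A\in p$.

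Finally I would run the induction. Writing $FS_k=FS(\langle y_n\rangle_{n=1}^{k})$, $FP_k=FP(\langle y_n\rangle_{n=1}^{k})$ and $M_k=\max H_k$, suppose $y_1,\dots,y_k$ have been chosen as blocks of $\langle x_n\rangle$ with $FS_k\cup FP_k\subseteq A$ and with $-s+A\in p$ for every $s\in FS_k$. Put
\[
R_k=A\cap\bigcap_{s\in FS_k}(-s+A)\cap\bigcap_{P\in FP_k\cup\{1\}}P^{-1}A\cap FS(\langle x_n\rangle_{n=M_k+1}^{\infty}).
\]
Every factor lies in $p$: the sum-translates by the inductive hypothesis, the product-preimages by the obstacle step, and the $FS$-tail because $p\in T$; hence $R_k\in p$. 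Using $p=p+p$, set $R_k^{\star}=\{a\in R_k:-a+R_k\in p\}\in p$ and choose $y_{k+1}\in R_k^{\star}$, which is automatically a block with $\min H_{k+1}>M_k$. Then $s+y_{k+1}\in A$ for all $s\in FS_k\cup\{0\}$ and $P\cdot y_{k+1}\in A$ for all $P\in FP_k\cup\{1\}$, so $FS_{k+1}\cup FP_{k+1}\subseteq A$; moreover the choice $-y_{k+1}+R_k\in p$ upgrades the additive hypothesis to $-s'+A\in p$ for all $s'\in FS_{k+1}$, while the product hypothesis needs no maintenance because $P^{-1}A\in p$ holds for every $P$. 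This keeps the induction going, and the resulting $\langle y_n\rangle_{n=1}^{\infty}$ is the desired sum subsystem.
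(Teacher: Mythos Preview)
The paper does not give its own proof of this theorem; it is quoted from \cite{key-6} as background. Your argument is correct and is essentially the classical one: pick a minimal idempotent $p$ in $\bigcap_{m}\operatorname{cl}\!\big(FS(\langle x_n\rangle_{n=m}^{\infty})\big)$, show that $P^{-1}A\in p$ for every $P\in\mathbb{N}$, and then run a Galvin--Glazer induction that controls finite sums and finite products simultaneously.

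It is worth contrasting your handling of the key step $P^{-1}A\in p$ with what the paper does in its own generalisation (the essential $\mathcal{F}^{\star}$ theorem later in Section~3). You argue algebraically in $\beta\mathbb{N}$: the dilation $\mu_P$ is a continuous additive homomorphism onto $\operatorname{cl}(P\mathbb{N})$, this subsemigroup meets $K(\beta\mathbb{N})$, hence $\mu_P$ carries minimal idempotents to minimal idempotents, and central$^{\star}$-ness of $A$ gives $P^{-1}A\in p$. The paper, by contrast, proves the analogous lemma via its new elementary chain characterisation: from a witnessing chain $\langle C_k\rangle$ for $B$ one gets the dilated chain $\langle nC_k\rangle$ for $nB$, so ``essential $\mathcal{F}$'' is preserved under dilation, whence ``essential $\mathcal{F}^{\star}$'' is preserved under $n^{-1}$. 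Your route is shorter for the specific central$^{\star}$ case because minimality of idempotents behaves well under surjective homomorphisms; the paper's route is what makes the result go through for a general dilation-invariant Ramsey family $\mathcal{F}$, where no such structural fact about $K(\beta S)$ is available. In the inductive step you also observe, correctly, that the product hypothesis needs no maintenance (since $P^{-1}A\in p$ for \emph{every} $P$), which lets you carry the slightly lighter invariant $FS_k\subseteq A^{\star}$, $FP_k\subseteq A$ instead of the paper's $FP_k\subseteq A^{\star}$.
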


In \cite[Theorem 2.10]{key-5}, it was established an analogue version
of the above theorem in case of $C^{\star}$ sets for some specific
type of sequences called almost minimal sequence \cite[Definition 2.3]{key-5}:
\begin{defn}
A sequence $\langle x_{n}\rangle_{n=1}^{\infty}$ in $\mathbb{N}$
is almost minimal sequence if $$\cap_{m=1}^{\infty}cl\left(FS\left(\langle x_{n}\rangle_{n=m}^{\infty}\right)\right)\cap J\left(\mathbb{N}\right)\neq\emptyset$$.
\end{defn}

\noindent In \cite[Theorem 2.7]{key-5}, it has been characterized the almost
minimal sequences by the following theorem.
\begin{thm}
In $(\mathbb{N},+)$ the following conditions are equivalent:
\begin{enumerate}
\item $\langle x_{n}\rangle_{n=1}^{\infty}$ is almost minimal sequence.
\item $FS(\langle x_{n}\rangle_{n=1}^{\infty})$ is a $J$-set.
\item There is an idempotent in $\cap_{m=1}^{\infty}cl\left(FS\left(\langle x_{n}\rangle_{n=m}^{\infty}\right)\right)\cap J\left(\mathbb{N}\right)$.
\end{enumerate}
\end{thm}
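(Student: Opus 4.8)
The plan is to prove the cycle $(3)\Rightarrow(1)\Rightarrow(2)\Rightarrow(3)$, throughout identifying $J(\mathbb{N})$ with $\beta(\mathcal{J})$, the closed subsemigroup of ultrafilters all of whose members are $J$-sets, and using the two basic facts that $\overline{A}=\{p\in\beta\mathbb{N}:A\in p\}$ and that $p\in\beta(\mathcal{J})$ means every member of $p$ is a $J$-set. The implication $(3)\Rightarrow(1)$ is immediate, since an idempotent lying in $\bigcap_{m=1}^{\infty}cl\left(FS\left(\langle x_{n}\rangle_{n=m}^{\infty}\right)\right)\cap J(\mathbb{N})$ certifies that this set is non-empty. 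For $(1)\Rightarrow(2)$ I would pick any $p$ in that intersection; taking $m=1$ gives $p\in cl\left(FS\left(\langle x_{n}\rangle_{n=1}^{\infty}\right)\right)$, so $FS\left(\langle x_{n}\rangle_{n=1}^{\infty}\right)\in p$, and since $p\in\beta(\mathcal{J})$ this member is a $J$-set.

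The real content is $(2)\Rightarrow(3)$, and here I would follow the template of the Corollary proved above with $\mathcal{F}=\mathcal{J}$. First I would promote the hypothesis on $FS\left(\langle x_{n}\rangle_{n=1}^{\infty}\right)$ to every tail. Writing
\[
FS\left(\langle x_{n}\rangle_{n=1}^{\infty}\right)=FS\left(\langle x_{n}\rangle_{n=m}^{\infty}\right)\cup FS\left(\langle x_{n}\rangle_{n=1}^{m-1}\right)\cup\bigcup_{s\in FS\left(\langle x_{n}\rangle_{n=1}^{m-1}\right)}\bigl(s+FS\left(\langle x_{n}\rangle_{n=m}^{\infty}\right)\bigr),
\]
a finite union, and invoking the Ramsey property of $\mathcal{J}$, the finite piece $FS\left(\langle x_{n}\rangle_{n=1}^{m-1}\right)$ is discarded because a finite set is never a $J$-set; hence either $FS\left(\langle x_{n}\rangle_{n=m}^{\infty}\right)$ is a $J$-set or $s+FS\left(\langle x_{n}\rangle_{n=m}^{\infty}\right)$ is for some $s$, and applying left inverse shift invariance of $\mathcal{J}$ to the latter yields that the tail $FS\left(\langle x_{n}\rangle_{n=m}^{\infty}\right)$ is a $J$-set for every $m$.

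With that in hand I would set $C_{m}=FS\left(\langle x_{n}\rangle_{n=m}^{\infty}\right)$, a decreasing sequence of $J$-sets. Given $x\in C_{m}$, say $x=\sum_{t\in F}x_{t}$ with $\max F=k$, any element of $C_{k+1}$ added to $x$ again lies in $FS\left(\langle x_{n}\rangle_{n=m}^{\infty}\right)$, so $C_{k+1}\subseteq x^{-1}C_{m}$; thus $\langle C_{m}\rangle$ satisfies condition \ref{chain} of Theorem \ref{thm} with $\mathcal{F}=\mathcal{J}$. Tracing the proof of $(\ref{3chain})\Rightarrow(1)$ of that theorem, the idempotent produced lives in $M\cap\beta(\mathcal{F})$, where $M=\bigcap_{m}\overline{C_{m}}$ is the compact subsemigroup supplied by \cite[Theorem 4.20]{key-12}; concretely, \cite[Theorem 3.11]{key-12} gives $p$ with $\{C_{m}:m\in\mathbb{N}\}\subseteq p\subseteq\mathcal{J}$, so $M\cap\beta(\mathcal{J})$ is a non-empty compact subsemigroup and hence contains an idempotent by Ellis' theorem. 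Since $M=\bigcap_{m}cl\left(FS\left(\langle x_{n}\rangle_{n=m}^{\infty}\right)\right)$ and $\beta(\mathcal{J})=J(\mathbb{N})$, this idempotent is exactly the one required in $(3)$.

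The step I expect to be the main obstacle is promoting the hypothesis to all tails, i.e. the use of left inverse shift invariance of $\mathcal{J}$: one must confirm that $-s+E$ is a $J$-set whenever $E$ is, which for $(\mathbb{N},+)$ reduces to checking that the witnessing shift $a$ in the definition of a $J$-set can be taken larger than any prescribed bound. Once this translation invariance of $\mathcal{J}$ is in place (it is the exact analogue of the hypothesis used in the Corollary above), the rest is a bookkeeping application of the machinery already assembled in Theorem \ref{thm}.
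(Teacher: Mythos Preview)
Your argument is correct. The implication $(2)\Rightarrow(3)$ is where the two approaches diverge, and the comparison is worth recording. The paper (in its proof of the generalization, Theorem~\ref{equiv}) stays at the ultrafilter level: starting from $p\in\beta(\mathcal{J})$ with $FS\left(\langle x_{n}\rangle_{n=1}^{\infty}\right)\in p$, it applies the same tail decomposition \emph{inside} $p$, discards the finite piece because $p$ is non-principal, and if a translate $t+FS\left(\langle x_{n}\rangle_{n=m}^{\infty}\right)$ lands in $p$ it passes to the shifted ultrafilter $q$ with $t+q=p$, checking $q\in\beta(\mathcal{J})$ via inverse shift invariance; nestedness and compactness then give a point in the full intersection, and Ellis is invoked directly on the compact subsemigroup $\bigcap_{m}cl\left(FS\left(\langle x_{n}\rangle_{n=m}^{\infty}\right)\right)\cap\beta(\mathcal{J})$.

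You instead work at the set level: the Ramsey property of $\mathcal{J}$ (rather than the ultrafilter property of a fixed $p$) selects a piece, inverse shift invariance is applied to sets rather than ultrafilters, and the resulting chain $\langle C_{m}\rangle$ is fed into Theorem~\ref{thm}, whose proof of $(\ref{3chain})\Rightarrow(1)$ you then unpack to locate the idempotent in the right place. The two routes use the same decomposition and the same invariance hypothesis on $\mathcal{J}$; what you gain is that your argument is a direct instantiation of the paper's own elementary characterization machinery (and of Corollary~2.4), so it is thematically more self-contained, while the paper's ultrafilter argument is slightly shorter since it bypasses Theorem~\ref{thm} and appeals to Ellis directly. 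Your flagged obstacle---that the witness $a$ in the $J$-set definition can be taken larger than any bound, so that $\mathcal{J}$ is left inverse shift invariant---is exactly the same ingredient the paper needs when it checks that the shifted ultrafilter $q$ remains in $\beta(\mathcal{J})$.
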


\noindent Now we are in position  to state the main theorem of \cite[Theorem 2.10]{key-5}:
\begin{thm}
Let $\langle x_{n}\rangle_{n=1}^{\infty}$ be a minimal sequence in
$\mathbb{N}$ and $A$ be $C^{\star}$ set in $\left(\mathbb{N},+\right)$.
Then there exists a subsystem $\langle y_{n}\rangle_{n=1}^{\infty}$
of $\langle x_{n}\rangle_{n=1}^{\infty}$ such that $$FS\left(\langle y_{n}\rangle_{n=1}^{\infty}\right)\cup FP\left(\langle y_{n}\rangle_{n=1}^{\infty}\right)\subseteq A$$.
\end{thm}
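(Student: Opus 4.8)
The plan is to read the statement through the essential-set hierarchy and then, for the substantive content, run the standard simultaneous sum-and-product tree construction. First I would dispatch the statement exactly as written by a short reduction. Every minimal idempotent of $(\beta\mathbb{N},+)$ already lies in $J(\mathbb{N})$: each of its members is piecewise syndetic, and piecewise syndetic sets are $J$-sets, so $K(\beta\mathbb{N})\subseteq J(\mathbb{N})$. Hence a set belonging to every idempotent of $J(\mathbb{N})$ belongs a fortiori to every minimal idempotent; that is, every $C^{\star}$-set is a $\mathrm{central}^{\star}$-set. Since $\langle x_{n}\rangle_{n=1}^{\infty}$ is minimal and $A$ is $C^{\star}$, hence $\mathrm{central}^{\star}$, the $\mathrm{central}^{\star}$ theorem of \cite{key-6} recalled above applies verbatim and produces the desired sum subsystem. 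This settles the statement as literally stated.

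The interest, as the surrounding discussion signals, is the same conclusion under the weaker hypothesis that $\langle x_{n}\rangle_{n=1}^{\infty}$ be only almost minimal, where the reduction above is unavailable and a self-contained argument is required. Here I would set $T=\bigcap_{m=1}^{\infty}\overline{FS(\langle x_{n}\rangle_{n=m}^{\infty})}$, a compact subsemigroup of $(\beta\mathbb{N},+)$. Almost minimality together with the characterization of almost minimal sequences recalled above (\cite[Theorem 2.7]{key-5}) furnishes an idempotent $p\in T\cap J(\mathbb{N})$; since $A$ is $C^{\star}$ and $p$ is an idempotent of $J(\mathbb{N})$, we get $A\in p$, which is the only place the hypothesis on $A$ enters additively.

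Next I would construct $H_{n}\in\mathcal{P}_{f}(\mathbb{N})$ with $\max H_{n}<\min H_{n+1}$ and $y_{n}=\sum_{t\in H_{n}}x_{t}$, maintaining at stage $n$ the invariants that $FS(\langle y_{i}\rangle_{i=1}^{n})\cup FP(\langle y_{i}\rangle_{i=1}^{n})\subseteq A$, that $-a+A\in p$ for every $a\in FS(\langle y_{i}\rangle_{i=1}^{n})\cup\{0\}$, and that $c^{-1}A\in p$ for every $c\in FP(\langle y_{i}\rangle_{i=1}^{n})\cup\{1\}$. Writing $D^{\star}=\{z\in D:-z+D\in p\}$ for $D\in p$, I would choose $y_{n+1}$ inside
\[
A\cap\bigcap_{a\in FS(\langle y_{i}\rangle_{i=1}^{n})\cup\{0\}}(-a+A)^{\star}\cap\bigcap_{c\in FP(\langle y_{i}\rangle_{i=1}^{n})\cup\{1\}}c^{-1}A\cap FS(\langle x_{n}\rangle_{n>\max H_{n}}^{\infty}),
\]
which is a finite intersection of members of $p$, hence in $p$, hence nonempty and meeting the prescribed sum-form. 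Membership in each $(-a+A)^{\star}$ pushes the new sums $a+y_{n+1}$ into $A$ and preserves the additive star-conditions via the usual idempotent identity $-z+(-a+A)\in p$; membership in each $c^{-1}A$ pushes the new products $c\,y_{n+1}$ into $A$. Taking unions over $n$ then yields $FS(\langle y_{n}\rangle_{n=1}^{\infty})\cup FP(\langle y_{n}\rangle_{n=1}^{\infty})\subseteq A$.

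The hard part will be the single ingredient that keeps the product constraints alive, namely the dilation lemma: if $A$ is a $C^{\star}$-set and $d\in\mathbb{N}$, then $d^{-1}A=\{x:dx\in A\}$ is again a $C^{\star}$-set, so that each $c^{-1}A$ above automatically lies in $p$. For $\mathrm{central}^{\star}$-sets the analogue is classical: writing $M_{d}$ for the continuous additive homomorphism extending $x\mapsto dx$, one has $M_{d}(q)\in K(\beta\mathbb{N})$ for every minimal idempotent $q$ because $K(\overline{d\mathbb{N}})=\overline{d\mathbb{N}}\cap K(\beta\mathbb{N})$, whence $A\in M_{d}(q)$ and $d^{-1}A\in q$. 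The obstacle is to transport this to $J(\mathbb{N})$: one must show $M_{d}(q)\in J(\mathbb{N})$ for every $J$-idempotent $q$, equivalently that a member $S$ of $M_{d}(q)$ is a $J$-set of $\mathbb{N}$ whenever $d^{-1}S$ is a $J$-set, i.e. that the $J$-set structure of the subsemigroup $\overline{d\mathbb{N}}$ is compatible with that of $\beta\mathbb{N}$. This dilation compatibility of $J$-sets is the delicate point; once it is in hand the construction closes exactly as in the $\mathrm{central}^{\star}$ case.
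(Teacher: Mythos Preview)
The paper does not prove this theorem; it is quoted from \cite[Theorem~2.10]{key-5} as motivation for the $\mathcal{F}$-generalization that follows. (Note also that ``minimal'' in the statement is evidently a transcription slip for ``almost minimal'', as the surrounding text makes clear; your observation that the literal statement reduces at once to the central$^{\star}$ theorem of \cite{key-6} via the implication $C^{\star}\Rightarrow\text{central}^{\star}$ is correct and is a genuine shortcut, but it bypasses the intended content.)

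For the intended almost-minimal version, your construction is essentially the one the paper carries out in proving its general $\mathcal{F}^{\star}$ theorem: pick an idempotent $p\in\bigcap_{m}\overline{FS(\langle x_{n}\rangle_{n\ge m})}\cap J(\mathbb{N})$ via the characterization of almost minimal sequences, then build $\langle y_{n}\rangle$ inductively inside finite intersections of additive shifts $-s+A^{\star}$ and multiplicative dilates $s^{-1}A$. The difference lies in how the dilation step is handled. You frame it as pushing the idempotent forward by the homomorphism $M_{d}$ and asking whether $M_{d}$ carries idempotents of $J(\mathbb{N})$ back into $J(\mathbb{N})$, calling this ``the delicate point''. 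The paper instead works combinatorially: using the chain characterization of essential $\mathcal{F}$-sets (Theorem~\ref{thm}(\ref{chain})), it observes that if $B$ is a $C$-set witnessed by a decreasing chain $\langle C_{k}\rangle$ of $J$-sets then $\langle dC_{k}\rangle$ witnesses that $dB$ is a $C$-set, whence $d^{-1}A$ is $C^{\star}$ whenever $A$ is, by a two-line duality argument. Both routes ultimately rest on the same underlying fact---that $J$-sets are preserved under $C\mapsto dC$---which the paper simply takes as a standing hypothesis (``dilation invariant family'') rather than proving for $\mathcal{J}$ specifically. So your concern is not misplaced, but the paper's packaging via Theorem~\ref{thm} is cleaner than the ultrafilter-homomorphism route and makes transparent exactly what is being assumed.
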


As we know that $C$-sets are essential $\mathcal{J}$-sets, the above
theorem motives us to think some analogue result for essential $\mathcal{F}$-sets.
First let us define , $\mathcal{F}$-minimal sequence.
\begin{defn}
A sequence $\langle x_{n}\rangle_{n=1}^{\infty}$ in $\mathbb{N}$
is $\mathcal{F}$-minimal sequence if 
\[
\cap_{m=1}^{\infty}cl\left(FS\left(\langle x_{n}\rangle_{n=m}^{\infty}\right)\right)\cap\beta\left(F\right)\neq\emptyset.
\]
\end{defn}

We can characterize $\mathcal{F}$- minimal sequences as like as almost
minimal sequence given below and can be proved in the same way as
the author did in \cite[Theorem 2.7]{key-5} for almost minimal sequences:
\begin{thm} \label{equiv}
For an inverse shift invariant family $\mathcal{F}$ in $(\mathbb{N},+)$
with Ramsay property such that $\beta\left(\mathcal{F}\right)$ is a subsemigroup
of $\beta S$, the following conditions are equivalent:
\begin{enumerate}
\item $\langle x_{n}\rangle_{n=1}^{\infty}$ is almost $\mathcal{F}$- minimal
sequence.
\item $FS(\langle x_{n}\rangle_{n=1}^{\infty})\in q$, for some $q\in\beta(\mathcal{F})$.
\item There is an idempotent in $\cap_{m=1}^{\infty}cl(FS(\langle x_{n}\rangle_{n=m}^{\infty}))\cap\beta(\mathcal{F})$.
\end{enumerate}
\end{thm}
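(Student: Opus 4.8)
The plan is to prove the cycle $(1)\Rightarrow(2)\Rightarrow(3)\Rightarrow(1)$, in which the only substantial step is $(2)\Rightarrow(3)$. Throughout write $T_{m}=\overline{FS\left(\langle x_{n}\rangle_{n=m}^{\infty}\right)}$, so that condition $(1)$ reads $\bigcap_{m=1}^{\infty}T_{m}\cap\beta(\mathcal{F})\neq\emptyset$ and condition $(3)$ asserts that this same set contains an idempotent. With this notation $(3)\Rightarrow(1)$ is immediate, since an idempotent in $\bigcap_{m}T_{m}\cap\beta(\mathcal{F})$ forces that set to be non-empty. For $(1)\Rightarrow(2)$, any $p\in\bigcap_{m}T_{m}\cap\beta(\mathcal{F})$ lies in particular in $T_{1}\cap\beta(\mathcal{F})$; from $p\in\overline{FS\left(\langle x_{n}\rangle_{n=1}^{\infty}\right)}$ one gets $FS\left(\langle x_{n}\rangle_{n=1}^{\infty}\right)\in p$, so $q=p$ witnesses $(2)$.

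For $(2)\Rightarrow(3)$, assume $FS\left(\langle x_{n}\rangle_{n=1}^{\infty}\right)\in q$ for some $q\in\beta(\mathcal{F})$; equivalently, recalling $\mathcal{F}=\bigcup\beta(\mathcal{F})$ from \cite[Theorem 5.1.1]{key-4}, $FS\left(\langle x_{n}\rangle_{n=1}^{\infty}\right)$ is a $\mathcal{F}$-set. The first thing I would establish is that \emph{every} tail $FS\left(\langle x_{n}\rangle_{n=m}^{\infty}\right)$ is a $\mathcal{F}$-set. This repeats, in additive notation, the computation already used in the first Corollary of Section~2: one decomposes
\[
FS\left(\langle x_{n}\rangle_{n=1}^{\infty}\right)=FS\left(\langle x_{n}\rangle_{n=m}^{\infty}\right)\cup FS\left(\langle x_{n}\rangle_{n=1}^{m-1}\right)\cup\bigcup_{t\in FS(\langle x_{n}\rangle_{n=1}^{m-1})}\left(t+FS\left(\langle x_{n}\rangle_{n=m}^{\infty}\right)\right),
\]
a finite union because $FS\left(\langle x_{n}\rangle_{n=1}^{m-1}\right)$ is finite. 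Since $\mathcal{F}$ has the Ramsey property and contains no finite set, one of the infinite pieces $FS\left(\langle x_{n}\rangle_{n=m}^{\infty}\right)$ or $t+FS\left(\langle x_{n}\rangle_{n=m}^{\infty}\right)$ must lie in $\mathcal{F}$; in either case inverse shift invariance ($-t+E\in\mathcal{F}$ whenever $E\in\mathcal{F}$) delivers $FS\left(\langle x_{n}\rangle_{n=m}^{\infty}\right)\in\mathcal{F}$.

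Once each tail is a $\mathcal{F}$-set, invoking $\mathcal{F}=\bigcup\beta(\mathcal{F})$ again translates this into $T_{m}\cap\beta(\mathcal{F})\neq\emptyset$ for every $m$. The sets $T_{m}\cap\beta(\mathcal{F})$ are closed and, since the tails decrease, form a nested family; by compactness of $\beta S$ a nested family of non-empty closed sets has non-empty intersection, so $M:=\bigcap_{m}T_{m}\cap\beta(\mathcal{F})\neq\emptyset$. It remains to see that $M$ is a subsemigroup: $\beta(\mathcal{F})$ is one by hypothesis, and $\bigcap_{m}T_{m}$ is the standard closed subsemigroup of $(\beta\mathbb{N},+)$ associated with an $FS$-sequence, as in \cite[Theorem 4.20]{key-12}. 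Hence $M$ is a non-empty compact subsemigroup, and Ellis' theorem provides an idempotent in $M$, which is exactly $(3)$.

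I expect the delicate points to be exactly those where the hypotheses on $\mathcal{F}$ are consumed in the second paragraph. One must know that $\mathcal{F}$ carries no finite set, so that the finite block $FS\left(\langle x_{n}\rangle_{n=1}^{m-1}\right)$ can be discarded after applying the Ramsey property (this holds for all the concrete families listed in the introduction, whose members are infinite), and one must have inverse shift invariance available in precisely the form needed to remove the translate $t$. The remaining ingredients—the finite-intersection argument and the subsemigroup structure of $\bigcap_{m}T_{m}$—are routine and can be quoted from \cite{key-12}.
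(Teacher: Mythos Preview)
Your proof is correct and follows the same route as the paper: the trivial implications $(3)\Rightarrow(1)\Rightarrow(2)$, and for $(2)\Rightarrow(3)$ the decomposition of $FS(\langle x_n\rangle_{n=1}^\infty)$ into the tail, a finite block, and finitely many translates of the tail, followed by the Ramsey property and inverse shift invariance to obtain $T_m\cap\beta(\mathcal F)\neq\emptyset$ for every $m$, and then Ellis' theorem applied to the compact subsemigroup $\bigcap_m T_m\cap\beta(\mathcal F)$. The only cosmetic difference is that the paper runs the key step inside the given ultrafilter (producing $q$ with $t+q=p$ and checking $q\in\beta(\mathcal F)$) whereas you phrase it at the level of the family; you are also more explicit than the paper about the nested--compactness argument and about the implicit assumption that $\mathcal F$ contains no finite set, which the paper uses tacitly when it discards the finite block.
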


\begin{proof}
$(1)\implies(2)$ follows from definition.\vspace{0.1in}

$(2)\implies(3)$ Since $FS\left(\langle x_{n}\rangle_{n=1}^{\infty}\right)\in q\in\beta(\mathcal{F})$
we get $cl\left(FS\left(\langle x_{n}\rangle_{n=1}^{\infty}\right)\right)\cap\beta\left(\mathcal{F}\right)\neq\emptyset$.
From \cite[Lemma 5.11]{key-12}, choose $\cap_{m=1}^{\infty}cl\left(FS\left(\langle x_{n}\rangle_{n=m}^{\infty}\right)\right)$.
It will easy to see that $\cap_{m=1}^{\infty}cl\left(FS\left(\langle x_{n}\rangle_{n=1}^{\infty}\right)\right)$
is a closed subsemigroup of $\beta\mathbb{N}$ and as well as $\beta(\mathcal{F})$
is also closed subsemigroup $\beta\mathbb{N}$. Hence $\,\cap_{m=1}^{\infty}cl\left(FS\left(\langle x_{n}\rangle_{n=m}^{\infty}\right)\right)\cap\beta\left(\mathcal{F}\right)$
is a compact subsemigroup of $\left(\beta\mathbb{N},+\right)$. So
it will be sufficient to check that $\cap_{m=1}^{\infty}cl\left(FS\left(\langle x_{n}\rangle_{n=m}^{\infty}\right)\right)\cap\beta\left(\mathcal{F}\right)\neq\emptyset$.

Now choose arbitrarily $m\in\mathbb{N}$ and then $FS\left(\langle x_{n}\rangle_{n=1}^{\infty}\right)=FS\left(\langle x_{n}\rangle_{n=m}^{\infty}\right)\cup FS\left(\langle x_{n}\rangle_{n=1}^{m-1}\right)\cup\left\{ t+FS\left(\langle x_{n}\rangle_{n=m}^{\infty}\right):t\in FS\left(\langle x_{n}\rangle_{n=1}^{m-1}\right)\right\} $
and so we have one of the followings:
\begin{enumerate}
\item $FS\left(\langle x_{n}\rangle_{n=m}^{\infty}\right)\in p$
\item $FS\left(\langle x_{n}\rangle_{n=1}^{m-1}\right)\in p$
\item $t+FS\left(\langle x_{n}\rangle_{n=m}^{\infty}\right)\in p$ for some
$t\in FS\left(\langle x_{n}\rangle_{n=1}^{m-1}\right)$.
\end{enumerate}
Now $(2)$ is not possible as in that case $p$ will be a member of
principle ultrafilter. If $(1)$ holds then we have done. Now if we
assume $(3)$ holds then for some $t\in FS\left(\langle x_{n}\rangle_{n=1}^{m-1}\right)$,
we have $t+FS\left(\langle x_{n}\rangle_{n=m}^{\infty}\right)\in p$.
Choose $q\in cl\left(FS\left(\langle x_{n}\rangle_{n=m}^{\infty}\right)\right)$
so that $t+q=p$. Now for every $F\in q$, $t\in\left\{ n\in\mathbb{N}:-n+\left(t+F\right)\in q\right\} $
so that $t+F\in p$. Since $\mathcal{F}$-sets are inverse shift invariant invariant,
$F$ is a $\mathcal{F}$-sets. We have $q\in\beta\left(\mathcal{F}\right)\cap cl\left(FS\left(\langle x_{n}\rangle_{n=m}^{\infty}\right)\right)$.\vspace{0.1in}

$(3)\implies(1)$ follows from definition of $\mathcal{F}$- minimal
sequence and condition $(3)$.
\end{proof}
To prove the main theorem, we need the following two lemmas are essential.
\begin{lem}
Let $\mathcal{F}$ be a dilation invariant family (i.e. the family is invariant under taking product by any element of $\mathbb{N}$) with Ramsay property such that $\beta\left(\mathcal{F}\right)$ is a subsemigroup
of $\beta S$.
If $A$ be an essential $\mathcal{F}$-set in $(\mathbb{N},+)$ then
$nA$ is also an essential $\mathcal{F}$-set in $(\mathbb{N},+)$
for any $n\in\mathbb{N}$.
\end{lem}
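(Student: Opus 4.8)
The plan is to realize $nA$ as a member of the image of the idempotent witnessing $A$, under the continuous extension of the dilation map $\mu_n:\mathbb{N}\to\mathbb{N}$, $\mu_n(x)=nx$. Since $\mu_n$ is an endomorphism of $(\mathbb{N},+)$, its Stone-\v{C}ech extension $\widetilde{\mu_n}:(\beta\mathbb{N},+)\to(\beta\mathbb{N},+)$ is a continuous homomorphism, and in particular it carries idempotents to idempotents. The whole argument then reduces to two checks about the image idempotent: that it contains $nA$, and that it lies in $\beta(\mathcal{F})$.

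First I would use the hypothesis that $A$ is an essential $\mathcal{F}$-set to fix an idempotent $p\in\beta(\mathcal{F})$ with $A\in p$, and set $q=\widetilde{\mu_n}(p)$. By the homomorphism property, $q+q=\widetilde{\mu_n}(p+p)=\widetilde{\mu_n}(p)=q$, so $q$ is idempotent. Next I would verify $nA\in q$. Recalling that $B\in q$ if and only if $\mu_n^{-1}(B)\in p$, it suffices to compute $\mu_n^{-1}(nA)=\{x:nx\in nA\}$; since multiplication by $n$ is injective (cancellative) on $\mathbb{N}$, this preimage is exactly $A$, which lies in $p$. Hence $nA\in q$.

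The only non-formal step is the remaining one: showing $q\in\beta(\mathcal{F})$, i.e. that every member of $q$ is an $\mathcal{F}$-set, and this is where I expect the hypotheses to do their work. Take any $B\in q$. Then $\mu_n^{-1}(B)=\{x:nx\in B\}\in p\subseteq\mathcal{F}$, so $\mu_n^{-1}(B)$ is an $\mathcal{F}$-set. Applying dilation invariance to this set gives $n\cdot\mu_n^{-1}(B)=B\cap n\mathbb{N}\in\mathcal{F}$, and then upward heredity of the family $\mathcal{F}$ (together with $B\cap n\mathbb{N}\subseteq B$) yields $B\in\mathcal{F}$. The small point to get right here is the identity $n\cdot\mu_n^{-1}(B)=B\cap n\mathbb{N}$, rather than merely some subset of $B$, so that dilation invariance can be invoked directly; this is exactly the place where dilation invariance, and not just inverse shift-invariance, is essential. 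With this in hand, $q$ is an idempotent of $\beta(\mathcal{F})$ containing $nA$, so $nA$ is an essential $\mathcal{F}$-set by definition.

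An alternative route avoiding the extension map starts from the downward directed family of Theorem \ref{thm} (or, since $\mathbb{N}$ is countable, the decreasing sequence $\langle C_k\rangle$ of $\mathcal{F}$-sets inside $A$), and checks that $\langle nC_k\rangle$ is a chain of $\mathcal{F}$-sets inside $nA$ with the required property: for $x=nc\in nC_k$ one computes $x^{-1}(nC_k)=n\cdot(c^{-1}C_k)$, so choosing $m$ with $C_m\subseteq c^{-1}C_k$ gives $nC_m\subseteq x^{-1}(nC_k)$, and each $nC_k$ is an $\mathcal{F}$-set by dilation invariance. Either route works; I would present the homomorphism argument as primary because it makes the role of dilation invariance most transparent and keeps the bookkeeping minimal.
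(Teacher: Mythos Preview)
Your argument is correct. The paper's proof is precisely your \emph{alternative} route: it invokes the countable chain characterization (Theorem~\ref{thm}(4)) to get a decreasing sequence $\langle C_k\rangle$ of $\mathcal{F}$-sets inside $A$, multiplies through by $n$ to get $\langle nC_k\rangle$ inside $nA$, and verifies the shift condition via the identity $-nc + nC_k = n(-c + C_k)$ together with dilation invariance for the $\mathcal{F}$-set property.

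Your primary route via the continuous extension $\widetilde{\mu_n}$ of the dilation endomorphism is a genuinely different argument. It trades the combinatorial bookkeeping of the chain condition for the standard fact that the Stone--\v{C}ech extension of a semigroup endomorphism is a homomorphism of $(\beta\mathbb{N},+)$, and then isolates the role of dilation invariance cleanly in the single computation $n\cdot\mu_n^{-1}(B) = B\cap n\mathbb{N}\in\mathcal{F}$. This approach is more conceptual and slightly shorter, and it has the advantage of producing the witnessing idempotent explicitly as $\widetilde{\mu_n}(p)$; on the other hand, the paper's approach is entirely internal to the elementary characterization just established, so it does not require importing the homomorphism-extension machinery from~\cite{key-12}. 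Either is fine, and since you already sketched the chain argument as well, you have both bases covered. (One cosmetic point: in your alternative route you slip into multiplicative notation $x^{-1}(nC_k)$ while working in $(\mathbb{N},+)$; write $-x + nC_k$ to match the ambient operation.)
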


\begin{proof}
If $A$ be an essential $\mathcal{F}$-set, then by elementary characterization
of essential $\mathcal{F}$-set, we get a sequence of $\mathcal{F}$-sets
$\left\langle C_{k}\right\rangle _{k=1}^{\infty}$ with 
\[
A\supseteq C_{1}\supseteq C_{2}\supseteq\cdots
\]
satisfying property \ref{chain} of theorem \ref{thm}. Now consider
the sequence $\left\langle nC_{k}\right\rangle _{k=1}^{\infty}$ of
$\mathcal{F}$-sets which satisfies 
\[
nA\supseteq nC_{1}\supseteq nC_{2}\supseteq\cdots
\]
 and for each $k\in\mathbb{N}$ and each $t\in nC_{k}$, there exists
$p\in\mathbb{N}$ with $nC_{p}\subseteq-t+nC_{k}$. This proves that
$nA$ is an essential $\mathcal{F}$-set in $(\mathbb{N},+)$ for
any $n\in\mathbb{N}$.
\end{proof}
We get another lemma given below.
\begin{lem}
Let $\mathcal{F}$ be a dilation invariant family with Ramsay property such that $\beta\left(\mathcal{F}\right)$ is a subsemigroup
of $\beta S$.
If $A$ be an essential $\mathcal{F}^{\star}$-set in $(\mathbb{N},+)$
then $n^{-1}A$ is also a essential $\mathcal{F^{\star}}$-set in
$(\mathbb{N},+)$ for any $n\in\mathbb{N}$.
\end{lem}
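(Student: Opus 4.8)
The plan is to sidestep the elementary characterization entirely and instead exploit the duality between essential $\mathcal{F}$-sets and essential $\mathcal{F}^{\star}$-sets, combined with the preceding lemma. By definition, $A$ is an essential $\mathcal{F}^{\star}$-set precisely when $A$ meets every essential $\mathcal{F}$-set; so to prove that $n^{-1}A=\{y\in\mathbb{N}:ny\in A\}$ is an essential $\mathcal{F}^{\star}$-set, it suffices to show that $n^{-1}A\cap B\neq\emptyset$ for every essential $\mathcal{F}$-set $B$. The observation that makes this work is that dilation by $n$ carries essential $\mathcal{F}$-sets to essential $\mathcal{F}$-sets, which is exactly the content of the previous lemma.

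First I would fix an arbitrary essential $\mathcal{F}$-set $B$ in $(\mathbb{N},+)$. By the previous lemma, $nB$ is again an essential $\mathcal{F}$-set. Since $A$ is assumed to be essential $\mathcal{F}^{\star}$, it must intersect $nB$; choose $x\in A\cap nB$. Writing $x=nb$ with $b\in B$, the membership $nb\in A$ is precisely the statement $b\in n^{-1}A$, so $b\in B\cap n^{-1}A$ and this intersection is non-empty. As $B$ was arbitrary, $n^{-1}A$ meets every essential $\mathcal{F}$-set, which is exactly the assertion that $n^{-1}A$ is an essential $\mathcal{F}^{\star}$-set.

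The argument has no serious obstacle once the duality is set up correctly; the only point needing care is the passage from $A\cap nB\neq\emptyset$ to $B\cap n^{-1}A\neq\emptyset$, i.e. verifying that the witness of the first intersection, after division by $n$, witnesses the second. It is worth stressing why the dilation invariance hypothesis on $\mathcal{F}$ is indispensable: it is what guarantees, through the previous lemma, that $nB$ remains an essential $\mathcal{F}$-set, and without it the reduction collapses. An alternative route would be to unwind the elementary characterization of Theorem \ref{thm} and construct a decreasing chain of $\mathcal{F}$-sets inside $n^{-1}A$ directly, but that would be considerably more laborious and, given the clean duality with the preceding lemma, unnecessary.
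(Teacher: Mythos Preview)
Your proposal is correct and follows essentially the same route as the paper: both argue by duality, fixing an arbitrary essential $\mathcal{F}$-set $B$, invoking the previous lemma to conclude $nB$ is essential $\mathcal{F}$, intersecting with $A$, and then dividing the witness by $n$ to land in $B\cap n^{-1}A$. The paper's proof is just a terser rendering of exactly this argument.
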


\begin{proof}
It is sufficient to show that for any essential $\mathcal{F}$-set
$B$, $B\cap n^{-1}A\neq\emptyset$. Since $B$ is essential $\mathcal{F}$-set
, $nB$ is essential $\mathcal{F}$-set and $A\cap nB\neq\emptyset$.
Choose $m\in A\cap nB$ and $k\in B$ such that $m=nk$. Therefore
$k=m/n\in n^{-1}A$ so $B\cap n^{-1}A\neq\emptyset$.
\end{proof}
Now we will show that all $\mathcal{F}^{\star}$-set have a substantial
multiplicative property.
\begin{thm}
Let $\mathcal{F}$ be an inverse shift invariant and dilation invariant family with Ramsay property such that $\beta\left(\mathcal{F}\right)$ is a subsemigroup
of $\beta S$.
Let $\langle x_{n}\rangle_{n=1}^{\infty}$ be a $\mathcal{F}$-minimal
sequence and $A$ be a an essential $\mathcal{F}$-set in $(\mathbb{N},+)$. Then there exists a sum
subsystem $\langle y_{n}\rangle_{n=1}^{\infty}$ of $\langle x_{n}\rangle_{n=1}^{\infty}$
such that\textup{ $FS(\langle y_{n}\rangle_{n=1}^{\infty})\cup FP(\langle y_{n}\rangle_{n=1}^{\infty})\subseteq A$.}
\end{thm}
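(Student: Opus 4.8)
The plan is to reproduce the Bergelson–Hindman / \cite[Theorem 2.10]{key-5} scheme, using an idempotent of $\beta(\mathcal{F})$ supplied by the $\mathcal{F}$-minimality of $\langle x_n\rangle_{n=1}^{\infty}$ in place of the almost-minimal idempotent, and feeding the multiplicative side of the construction with the two dilation Lemmas proved just above. (Here $A$ should be read as an essential $\mathcal{F}^{\star}$-set, consistent with the preceding lemmas.) First I would fix the idempotent: the $\mathcal{F}$-minimality gives that $T=\bigcap_{m=1}^{\infty}cl\left(FS(\langle x_n\rangle_{n=m}^{\infty})\right)\cap\beta(\mathcal{F})$ is non-empty, and by Theorem \ref{equiv} it contains an idempotent $p$. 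Three properties of $p$ will drive everything. On the additive side, $A$ being essential $\mathcal{F}^{\star}$ means $A\in q$ for every idempotent $q\in\beta(\mathcal{F})$, so $A\in p$; hence $A^{\star}=\{x\in A:-x+A\in p\}$ lies in $p$ and satisfies $-x+A^{\star}\in p$ for each $x\in A^{\star}$ by the standard idempotent lemma \cite[Lemma 4.14]{key-12}. On the multiplicative side, the second Lemma above shows $k^{-1}A$ is again essential $\mathcal{F}^{\star}$ for every $k\in\mathbb{N}$, so $k^{-1}A\in p$. Finally, since $p\in cl\left(FS(\langle x_n\rangle_{n=m}^{\infty})\right)$ for every $m$, each tail set $FS(\langle x_n\rangle_{n=m}^{\infty})$ belongs to $p$.

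Next I would build the blocks $H_1,H_2,\ldots$ with $\max H_n<\min H_{n+1}$ and set $y_n=\sum_{t\in H_n}x_t$, carrying the inductive hypothesis that for every non-empty $F\subseteq\{1,\ldots,n\}$ one has $\sum_{i\in F}y_i\in A^{\star}$ and $\prod_{i\in F}y_i\in A$. To choose $y_{n+1}$, I consider
$$E_n=A^{\star}\cap\bigcap_{\emptyset\neq F\subseteq\{1,\ldots,n\}}\left(-\sum_{i\in F}y_i+A^{\star}\right)\cap\bigcap_{\emptyset\neq F\subseteq\{1,\ldots,n\}}\left(\left(\prod_{i\in F}y_i\right)^{-1}A\right).$$
Each of these finitely many sets is in $p$: $A^{\star}\in p$; the additive translates are in $p$ because $\sum_{i\in F}y_i\in A^{\star}$ and $-x+A^{\star}\in p$ for $x\in A^{\star}$; and the multiplicative preimages are in $p$ since each is of the form $k^{-1}A$ with $k=\prod_{i\in F}y_i$, which is essential $\mathcal{F}^{\star}$. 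Hence $E_n\in p$, so $E_n\cap FS(\langle x_t\rangle_{t>\max H_n})\in p$ is non-empty, and I pick $y_{n+1}=\sum_{t\in H_{n+1}}x_t$ from it with $\min H_{n+1}>\max H_n$.

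Finally I would verify propagation and conclude. Membership of $y_{n+1}$ in the three parts of $E_n$ yields exactly $y_{n+1}\in A^{\star}$, $\sum_{i\in F}y_i+y_{n+1}\in A^{\star}\subseteq A$ for all $F\subseteq\{1,\ldots,n\}$, and $\left(\prod_{i\in F}y_i\right)\cdot y_{n+1}\in A$ for all such $F$; since $n+1$ is the largest index, the latter are precisely the new finite products $\prod_{i\in F\cup\{n+1\}}y_i$. This re-establishes the hypothesis at stage $n+1$, and taking $n\to\infty$ gives $FS(\langle y_n\rangle_{n=1}^{\infty})\cup FP(\langle y_n\rangle_{n=1}^{\infty})\subseteq A$. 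The genuinely delicate point — the main obstacle — is engineering a single member $E_n$ of $p$ that simultaneously encodes the additive continuation (which requires keeping all partial sums inside the sharpened idempotent set $A^{\star}$) and the multiplicative constraints (which survive only because of dilation invariance via the second Lemma). Asking the sum-side to land in $A^{\star}$ while demanding only $A$ for the product-side is exactly what keeps the intersection in $p$ and lets the recursion run; if one tried to sharpen the product-side as well, there is no analogue of the additive idempotent structure to guarantee membership in $p$, so that asymmetry is essential.
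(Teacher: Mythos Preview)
Your proposal is correct and follows essentially the same route as the paper: fix an idempotent $p\in\bigcap_{m}cl\bigl(FS(\langle x_n\rangle_{n=m}^{\infty})\bigr)\cap\beta(\mathcal{F})$ via Theorem~\ref{equiv}, use $A^{\star}$ and its additive translates for the sum side, invoke the dilation lemma so that each $k^{-1}A\in p$ for the product side, and inductively pick $y_{m+1}$ from the intersection with a tail finite-sum set. The only cosmetic difference is that the paper writes $s^{-1}A^{\star}$ (and keeps $FP(\langle y_i\rangle_{i=1}^{m})\subseteq A^{\star}$) where you use $s^{-1}A$; your asymmetric version is in fact more directly justified by the preceding lemma, and the paper's stronger-looking invariant is not actually needed for the conclusion.
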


\begin{proof}
Since $\langle x_{n}\rangle_{n=1}^{\infty}$ is a $\mathcal{F}$-minimal
sequence in $\mathbb{N}$, we can find some essential idempotent $p\in E\left(\beta\left(\mathcal{\mathcal{F}}\right)\right)$
for which $FS(\langle x_{n}\rangle_{n=m}^{\infty})\in p$ for each $m\in \mathbb{N}$. Since A
be an essential $\mathcal{F}^{\star}$-set for every $n\in\mathbb{N}$, $n^{-1}A\in p$.
Let $A^{\star}=\{n\in A:-n+A\in p\}$, then $A^{\star}\in p$. We
can choose $y_{1}\in A^{\star}\cap FS(\langle x_{n}\rangle_{n=1}^{\infty})$.
Inductively, let $m\in\mathbb{N}$ and $\langle y_{i}\rangle_{i=1}^{m}$,
$\langle H_{i}\rangle_{i=1}^{m}$ in $\mathcal{P}_{f}(\mathbb{N})$
be chosen with the following property:
\begin{enumerate}
\item $i\in\{1,2,\ldots\,,m-1\},\,\text{max}H_{i}<\text{min}H_{i+1}.$
\item $\text{If}\,y_{i}={\displaystyle \sum_{t\in H_{i}}x_{t},\,\text{then}\,\sum_{t\in H_{i}}x_{t}\in A^{\star},\,\text{and}\,FP(\langle y_{i}\rangle_{i=1}^{m})\subseteq A^\star}.$
\end{enumerate}
We observe that $\{\,\sum_{t\in H}x_{t}:\,H\in\mathcal{P}_{f}(\mathbb{N}),\,\text{min}H>\text{max}H_{m}\}\in p$.
Let us set $B=\{\,\sum_{t\in H}x_{t}:\,H\in\mathcal{P}_{f}(\mathbb{N}),\,\text{min}H>\text{max}H_{m}\}$,
$E_{1}=FS(\langle y_{t}\rangle_{n=1}^{m})$ and $E_{2}=FP(\langle y_{t}\rangle_{n=1}^{m})$.
Now consider $D=B\cap A^{\star}\cap\bigcap_{s\in E_{1}}\left(-s+A^{\star}\right)\cap\bigcap_{s\in E_{2}}\left(s^{-1}A^{\star}\right)$.
Then $D\in p$. Choose $y_{m+1}\in D$ and $H_{m+1}\in\mathcal{P}_{f}\left(\mathbb{N}\right)$
 such that $\text{min}H_{m+1}>\text{max}H_{m}$. Putting $y_{m+1}=\sum_{t\in H_{m+1}}x_{t}$,
it shows that the induction can be continued and proves the theorem.
\end{proof}

\section{Essential $\mathcal{F}^{\star}$-sets in weak rings}

In this section we extend the previous theorem to a much wider class,
called \textquotedblleft weak rings\textquotedblright{} and start
with the following definition:
\begin{defn}
\cite [Definition 16.33, Page 419]{key-7}
\begin{enumerate}
\item A left weak ring is a triple $\left(S,+,\cdot\right)$ such that $\left(S,+\right)$
and $\left(S,.\right)$ are semigroups and the left distributive law
holds. That is, for all $x,y,z\in S$ one has$x\cdot\left(y+z\right)=x\cdot y+x\cdot z$.
\item A right weak ring is a triple $\left(S,+,\cdot\right)$ such that
$\left(S,+\right)$ and $\left(S,.\right)$ are semigroups and the
right distributive law holds. That is, for all $x,y,z\in S$ one has$\left(x+y\right)\cdot z=x\cdot z+y\cdot z$.
\item A weak ring is a triple $\left(S,+,\cdot\right)$ which is both a
left weak ring and a right weak ring.
\end{enumerate}
\end{defn}

\noindent Dilation invariance of a family is defined as:
\begin{defn}
This is similar as for dilation invariance of $\mathbb{N}$.
\begin{enumerate} 
\item Let $\left(S,+,\cdot\right)$ be a left weak ring. A family $\mathcal{F}$
is called left dilation invariant if for any $s\in S$ and $A\in\mathcal{F}$,
$sA\in\mathcal{F}$.
\item Let $\left(S,+,\cdot\right)$ be a right weak ring. A family $\mathcal{F}$
is called right dilation invariant if for any $s\in S$ and $A\in\mathcal{F}$,
$As\in\mathcal{F}$.
\item Let $\left(S,+,\cdot\right)$ be a weak ring. A family $\mathcal{F}$
is called dilation invariant if it is both left and right dilation
invariant.
\end{enumerate}
\end{defn}

\noindent Recall that in $FP\left(\langle x_{n}\rangle_{n=1}^{\infty}\right)$
the products are taken in increasing order of indices and the following
definition is taken from \cite[Definition 16.36]{key-12}:
\begin{defn}
Let $\left(S,\cdot\right)$ be a semigroup, let $\langle x_{n}\rangle_{n=1}^{\infty}$
be a sequence in $S$, and let $k\in\mathbb{N}$. Then $AP\left(\langle x_{n}\rangle_{n=1}^{k}\right)$
is the set of all products of terms of $\langle x_{n}\rangle_{n=1}^{k}$in
any order with no repetitions. Similarly $AP\langle x_{n}\rangle_{n=1}^{\infty}$is
the set of all products of terms of $\langle x_{n}\rangle_{n=1}^{\infty}$
in any order with no repetitions.
\end{defn}

For example, for $k=3$, we obtain the following:

\[
AP\left(\langle x_{n}\rangle_{n=1}^{3}\right)=\left\lbrace x_{1},x_{2},x_{3},x_{1}x_{2},x_{1}x_{2},x_{1}x_{3},x_{2}x_{3},x_{2}x_{1},x_{3}x_{2},\right.
\]
\[
\left. \qquad \qquad \qquad \qquad  x_{1}x_{2}x_{3},x_{1}x_{3}x_{2},x_{2}x_{1}x_{3},x_{2}x_{1}x_{3},x_{2}x_{3}x_{1},x_{3}x_{1}x_{2},x_{3}x_{2}x_{1}\right\rbrace 
\]

From \cite[Theorem 16.38]{key-12}, we get the following theorem for
IP$^{\star}$-sets which is our main aim in this section to prove
analogous result for essential $\mathcal{F}^{\star}$-sets.
\begin{thm}
Let $\left(S,+,\cdot\right)$ be a weak ring, let $A$ be an IP$^{\star}$
set in $\left(S,+\right)$, and let $\langle x_{n}\rangle_{n=1}^{\infty}$
be any sequence in $S$. Then there exists a sum subsystem $\langle y_{n}\rangle_{n=1}^{\infty}$
of $\langle x_{n}\rangle_{n=1}^{\infty}$ in S such that \textup{$FS(\langle y_{n}\rangle_{n=1}^{\infty})\cup FP(\langle y_{n}\rangle_{n=1}^{\infty})\subseteq A$.}
\end{thm}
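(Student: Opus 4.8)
The plan is to run the Galvin--Glazer style induction already used in the additive--multiplicative results above, with the role played earlier by ``inverse shift invariance'' now taken over by the distributive law of the weak ring. The single indispensable new ingredient is a preservation lemma: \emph{if $C$ is an IP$^{\star}$ set in $(S,+)$ and $s\in S$, then $s^{-1}C=\{y\in S:s\cdot y\in C\}$ is again an IP$^{\star}$ set in $(S,+)$}. I would prove this straight from the characterization of IP$^{\star}$ sets as those meeting every IP set. Given an arbitrary sequence $\langle z_n\rangle_{n=1}^{\infty}$, the left distributive law gives $s\cdot\sum_{n\in F}z_n=\sum_{n\in F}(s\cdot z_n)$, so $FS(\langle s\cdot z_n\rangle_{n=1}^{\infty})$ is itself an IP set; since $C$ is IP$^{\star}$ it meets this set in some point $\sum_{n\in F}(s\cdot z_n)=s\cdot\sum_{n\in F}z_n$, and then $\sum_{n\in F}z_n$ witnesses $FS(\langle z_n\rangle_{n=1}^{\infty})\cap s^{-1}C\neq\emptyset$. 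As the sequence was arbitrary, $s^{-1}C$ is IP$^{\star}$. This is the one place where the weak-ring hypothesis is genuinely used (products are taken in increasing index order, so the new factor sits on the right and only left distributivity is needed), and I expect it to be the sole non-routine step.

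With that lemma available I would fix the idempotent. Since $\bigcap_{m=1}^{\infty}\overline{FS(\langle x_n\rangle_{n=m}^{\infty})}$ is a compact subsemigroup of $(\beta S,+)$, it contains an additive idempotent $p$, and therefore $FS(\langle x_n\rangle_{n=m}^{\infty})\in p$ for every $m$. Because $A$ is IP$^{\star}$ we have $A\in p$, and the preservation lemma yields $s^{-1}A\in p$ for every $s\in S$. Setting $A^{\star}=\{x\in A:-x+A\in p\}$, the standard ultrafilter facts give $A^{\star}\in p$ together with $-x+A^{\star}\in p$ whenever $x\in A^{\star}$.

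The construction is then an induction producing $y_m=\sum_{t\in H_m}x_t$ with $\max H_m<\min H_{m+1}$, maintaining the two invariants $FS(\langle y_i\rangle_{i=1}^{m})\subseteq A^{\star}$ and $FP(\langle y_i\rangle_{i=1}^{m})\subseteq A$. To pass from stage $m$ to stage $m+1$ I would take
\[
D=A^{\star}\cap\bigcap_{s\in FS(\langle y_i\rangle_{i=1}^{m})}(-s+A^{\star})\cap\bigcap_{s\in FP(\langle y_i\rangle_{i=1}^{m})}(s^{-1}A)\cap FS(\langle x_n\rangle_{n>\max H_m}^{\infty})
\]
and check $D\in p$: the first two blocks lie in $p$ because every such $s$ belongs to $A^{\star}$, the third block lies in $p$ by the preservation lemma, and the last is a tail $FS$-set, hence in $p$. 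Choosing $y_{m+1}\in D$, which is automatically of the form $\sum_{t\in H_{m+1}}x_t$ with $\min H_{m+1}>\max H_m$, preserves both invariants: the new finite sums are $s+y_{m+1}\in A^{\star}$ for $s\in FS(\langle y_i\rangle_{i=1}^{m})\cup\{0\}$, while, products being taken in increasing index order, the only new finite products are $s\cdot y_{m+1}$ with $s\in FP(\langle y_i\rangle_{i=1}^{m})$, and these lie in $A$ precisely because $y_{m+1}\in s^{-1}A$. Letting the induction run gives $FS(\langle y_n\rangle_{n=1}^{\infty})\subseteq A^{\star}\subseteq A$ and $FP(\langle y_n\rangle_{n=1}^{\infty})\subseteq A$, which is the assertion.
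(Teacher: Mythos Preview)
Your argument is correct and is precisely the standard Galvin--Glazer construction. Note, however, that the paper does not supply its own proof of this statement: it is quoted from \cite[Theorem 16.38]{key-12} and serves only as motivation for the essential $\mathcal{F}^{\star}$ analogue proved afterwards. Comparing your proof with the paper's argument for that analogue (and with the $\mathbb{N}$ version in Section~3), the structure is identical---fix an idempotent $p$ in the tail $FS$-subsemigroup, pass to $A^{\star}$, and at each step intersect $A^{\star}$, its additive shifts by previously built sums, its multiplicative pullbacks by previously built products, and the tail $FS$-set. Your preservation lemma (IP$^{\star}$ closed under $C\mapsto s^{-1}C$ via left distributivity) plays exactly the role of the paper's Lemma on dilation invariance; and since the statement asks only for $FP$ (increasing-index products) rather than $AP$, you rightly need only the left pullbacks $s^{-1}A$, whereas the paper's final theorem with $AP$ must also intersect with $A^{\star}s^{-1}$ and $s^{-1}A^{\star}t^{-1}$. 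One cosmetic point: writing ``$s\in FS(\langle y_i\rangle_{i=1}^{m})\cup\{0\}$'' presumes an additive identity that a weak ring need not have; simply treat the case $y_{m+1}$ itself via $y_{m+1}\in A^{\star}$ directly.
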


The above theorem is true for any sequence in $S$. But we show that
the above result is true for essential $\mathcal{F}^{\star}$-set
for $\mathcal{F}$-minimal sequences.
\begin{lem}
Let $S$ be a set, let $A\subseteq S$.
\begin{enumerate}
\item If $\left(S,+,\cdot\right)$ is a left weak ring and $\mathcal{F}$
is left dilation invariant family with Ramsay property and $A$ is
an essential $\mathcal{F}^{\star}$-set in $\left(S,+\right)$, then
$sA$ is an essential $\mathcal{F^{\star}}$-set in $(S,+)$.
\item If $\left(S,+,\cdot\right)$ is a right weak ring and $\mathcal{F}$
is right dilation invariant family with Ramsay property and $A$ is
an essential $\mathcal{F}^{\star}$-set in $\left(S,+\right)$, then
$As$ is an essential $\mathcal{F^{\star}}$-set in $(S,+)$.
\item If $\left(S,+,\cdot\right)$ is a weak ring and $\mathcal{F}$ is
dilation invariant family with Ramsay property and $A$ is an essential
$\mathcal{F}^{\star}$-set in $\left(S,+\right)$, then $sAt$ is
an essential $\mathcal{F^{\star}}$-set in $(S,+)$.
\end{enumerate}
\end{lem}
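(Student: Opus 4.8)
The plan is to handle the three parts uniformly through the algebra of $\beta S$, reducing everything to one statement about essential idempotents. Since $(S,\cdot)$ is associative we have $sAt=s(At)=(sA)t$, so part $(3)$ will follow by first applying part $(2)$ to produce the essential $\mathcal{F}^\star$-set $At$ and then applying part $(1)$ to $s(At)$, the full dilation invariance of $\mathcal{F}$ in a weak ring supplying the hypotheses at each step. Parts $(1)$ and $(2)$ are mirror images under the exchange of left and right distributivity, so I would carry out $(1)$ in detail and obtain $(2)$ symmetrically. Recalling that $A$ is essential $\mathcal{F}^\star$ exactly when $A\in q$ for every $q\in E(\beta(\mathcal{F}))$, the whole of part $(1)$ amounts to showing that $sA\in q$ for each essential idempotent $q$.

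The main tool is the left translation $\lambda_s\colon\beta S\to\beta S$, $\lambda_s(q)=s\cdot q$, which is already continuous because $\beta S$ is right topological and $s$ lies in its topological center. On $S$ the left distributive law gives $s\cdot(x+y)=s\cdot x+s\cdot y$, so $\lambda_s$ restricts to an additive endomorphism of $(S,+)$; by uniqueness of continuous extensions into the Hausdorff space $\beta S$, $\lambda_s$ is therefore a homomorphism of $(\beta S,+)$. Left dilation invariance then pushes $\beta(\mathcal{F})$ into itself: if $p\in\beta(\mathcal{F})$ and $E\in\lambda_s(p)$, then $s^{-1}E\in p\subseteq\mathcal{F}$, hence $s\cdot(s^{-1}E)\in\mathcal{F}$ by dilation invariance, and since $s\cdot(s^{-1}E)\subseteq E$ the upward heredity of the family forces $E\in\mathcal{F}$; thus $\lambda_s(p)\in\beta(\mathcal{F})$, and being a homomorphism $\lambda_s$ carries $E(\beta(\mathcal{F}))$ into $E(\beta(\mathcal{F}))$. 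The payoff is the reduction: if a given $q\in E(\beta(\mathcal{F}))$ can be written as $q=\lambda_s(p)$ with $p$ itself an essential idempotent, then $A\in p$ (because $A$ is essential $\mathcal{F}^\star$), and since $A\subseteq s^{-1}(sA)$ and $p$ is a filter we get $s^{-1}(sA)\in p$, that is $sA\in\lambda_s(p)=q$, as required.

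Everything therefore hinges on the realization step: writing an arbitrary $q\in E(\beta(\mathcal{F}))$ as $\lambda_s(p)$ for some essential idempotent $p$. Here I would pass to the fibre $\lambda_s^{-1}(\{q\})\cap\beta(\mathcal{F})$. Because $\lambda_s$ is a $(+)$-homomorphism, the fibre $\lambda_s^{-1}(\{q\})$ over the idempotent $q$ is a closed subsemigroup of $(\beta S,+)$, and $\beta(\mathcal{F})$ is a closed subsemigroup by hypothesis, so their intersection is a compact right topological semigroup; once it is nonempty, Ellis' theorem yields an idempotent $p$ inside it, which is precisely the essential idempotent with $\lambda_s(p)=q$ that the previous paragraph needs. \textbf{The hard part is exactly the nonemptiness of this fibre}, equivalently $q\in\lambda_s(\beta(\mathcal{F}))$. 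I expect to obtain it by first proving the base case $A=S$, namely that $sS$ is essential $\mathcal{F}^\star$, so that $sS\in q$ and hence $q\in\overline{sS}=\lambda_s(\beta S)$, and then upgrading a preimage in $\beta S$ to one lying in $\beta(\mathcal{F})$. This upgrade is the genuinely delicate point when $\lambda_s$ is not injective; in the motivating instance $S=\mathbb{N}$ it is automatic, since $\lambda_n$ is injective and $n\mathbb{N}$ is essential $\mathcal{F}^\star$, so the fibre reduces to the single essential idempotent $\lambda_n^{-1}(q)$.

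Finally, part $(2)$ is obtained verbatim with $\lambda_s$ replaced by the right translation $\rho_t\colon q\mapsto q\cdot t$ (continuous since $\beta S$ is right topological), which the right distributive law makes an additive endomorphism on $S$ and hence a $(+)$-homomorphism of $\beta S$; right dilation invariance pushes $\beta(\mathcal{F})$ into itself by the same upward-heredity computation, and the identity $A\subseteq (At)t^{-1}$ replaces $A\subseteq s^{-1}(sA)$. Part $(3)$ then follows by the composition $sAt=s(At)$ noted above. I would also record, by way of contrast, that the elementary chain characterization of Theorem \ref{thm} adapts directly only to the multiplicative preimages $s^{-1}A$, $At^{-1}$, $s^{-1}At^{-1}$; the forward images named in the statement require the topological realization step above, which is why the fibre argument is the crux.
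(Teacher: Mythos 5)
Your reductions are sound as far as they go: the extension of $\lambda_s$ to $\beta S$ is indeed an additive homomorphism, left dilation invariance plus upward heredity does give $\lambda_s\left(\beta(\mathcal{F})\right)\subseteq\beta(\mathcal{F})$, and the implication ``if every $q\in E\left(\beta(\mathcal{F})\right)$ equals $\lambda_s(p)$ for some essential idempotent $p$, then $sA\in q$'' is correct. But that realization step is never proved, and it is not a technicality --- it is where the entire content of the lemma sits. The route you sketch for it is circular: your ``base case,'' that $sS$ is an essential $\mathcal{F}^{\star}$-set, is exactly the instance $A=S$ of the lemma you are trying to prove, and the ``upgrade'' of a preimage from $\beta S$ to one in $\beta(\mathcal{F})$ is left undone. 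Moreover that upgrade would require something like inverse dilation invariance ($sA\in\mathcal{F}\Rightarrow A\in\mathcal{F}$), which is not among the hypotheses; and in a general weak ring $\lambda_s$ need not be injective, nor need $sS$ belong to every additive idempotent, so the fallback you invoke for $S=\mathbb{N}$ has no analogue. This is a genuine gap.

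The gap is also avoidable, because the claim in your closing paragraph --- that the chain characterization of Theorem \ref{thm} adapts only to the preimages $s^{-1}A$ --- is mistaken: the paper's proof is precisely the adaptation you dismissed. If $\left\langle C_{F}\right\rangle _{F\in I}$ is a downward directed family of subsets of $A$ witnessing condition \ref{3chain} of Theorem \ref{thm}, then $\left\langle sC_{F}\right\rangle _{F\in I}$ witnesses it for $sA$: each finite intersection contains $s\bigcap_{F\in\mathcal{G}}C_{F}$ and hence lies in $\mathcal{F}$ by left dilation invariance and upward heredity, and for $y=su\in sC_{F}$ one picks $G$ with $C_{G}\subseteq-u+C_{F}$ and gets $sC_{G}\subseteq-su+sC_{F}$ from the left distributive law. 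One caveat: this argument (the paper's) establishes the statement with ``essential $\mathcal{F}$-set'' in place of ``essential $\mathcal{F}^{\star}$-set,'' and that is evidently what is intended --- the paper's own proof opens with ``If $A$ be an essential $\mathcal{F}$-set,'' the parallel lemma for $\mathbb{N}$ in Section 3 is stated that way, and the subsequent lemma on $s^{-1}A$ invokes exactly the $\mathcal{F}$-version ($B$ essential $\mathcal{F}$ implies $sB$ essential $\mathcal{F}$); the $\star$ in the statement is a typo. Your literal reading of the $\star$-version is what forced you into the fibre argument, and that version does not follow from the stated hypotheses by either method.
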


\begin{proof}
It suffices to establish $(1)$ since then $(2)$ follows from a left-right
switch and $(3)$ follows from $(1)$ and $(2)$. If $A$ be an essential
$\mathcal{F}$-set, then by elementary characterization of essential
$\mathcal{F}$-set, we get a downward directed family of $\mathcal{F}$-sets $\left\langle C_F\right\rangle _{F\in I}$ such that for
each $F\in I$ and each $u\in C_F$, there exists $G\in I$
with $C_G\subseteq-u+C_F$. Now consider the downward directed family  $\left\langle sC_F\right\rangle _F\in I$
of $\mathcal{F}$-sets. Now for each $F\in I$ and each $su\in sC_F$, there exists
$G\in I$ with $sC_G\subseteq-su+sC_F$(using the left
distributive law of left weak ring). This proves that $sA$ is an
essential $\mathcal{F}$-set in $(S,+)$ for any $s\in S$.
\end{proof}
\noindent Now we are in position two prove the following two lemmas:
\begin{lem}
Let $S$ be a set, let $A\subseteq S$.
\begin{enumerate}
\item If $\left(S,+,\cdot\right)$ is a left weak ring and $\mathcal{F}$
is left dilation invariant family with Ramsay property such that $\beta\left(\mathcal{F}\right)$ is a subsemigroup
of $\beta S$ and $A$ is
an essential $\mathcal{F}^{\star}$-set in $\left(S,+\right)$, then
$s^{-1}A$ is an essential $\mathcal{F^{\star}}$-set in $(S,+)$.
\item If $\left(S,+,\cdot\right)$ is a right weak ring and $\mathcal{F}$
is right dilation invariant family with Ramsay property such that $\beta\left(\mathcal{F}\right)$ is a subsemigroup
of $\beta S$ and $A$ is
an essential $\mathcal{F}^{\star}$-set in $\left(S,+\right)$, then
$As^{-1}$ is an essential $\mathcal{F^{\star}}$-set in $(S,+)$.
\item If $\left(S,+,\cdot\right)$ is a weak ring and $\mathcal{F}$ is
dilation invariant family with Ramsay property such that $\beta\left(\mathcal{F}\right)$ is a subsemigroup
of $\beta S$ and $A$ is an essential
$\mathcal{F}^{\star}$-set in $\left(S,+\right)$, then $s^{-1}At^{-1}$
is an essential $\mathcal{F^{\star}}$-set in $(S,+)$.
\end{enumerate}
\end{lem}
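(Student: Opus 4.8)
The plan is to mirror the proof of the corresponding lemma over $(\mathbb{N},+)$: I would establish part $(1)$ directly, then deduce $(2)$ by a left--right switch and $(3)$ by composing $(1)$ and $(2)$, exactly as the preceding lemma was organized.

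For part $(1)$, recall that $s^{-1}A=\{y\in S:s\cdot y\in A\}$, and that $A$ being an essential $\mathcal{F}^{\star}$-set means $A$ meets every essential $\mathcal{F}$-set. Hence it suffices to show $B\cap s^{-1}A\neq\emptyset$ for every essential $\mathcal{F}$-set $B$ in $(S,+)$. First I would invoke the preceding lemma: since $\mathcal{F}$ is left dilation invariant and $B$ is an essential $\mathcal{F}$-set, the dilate $sB=\{s\cdot b:b\in B\}$ is again an essential $\mathcal{F}$-set in $(S,+)$. Because $A$ is essential $\mathcal{F}^{\star}$, we then have $A\cap sB\neq\emptyset$. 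Picking $m\in A\cap sB$ and writing $m=s\cdot b$ with $b\in B$ (directly from the definition of $sB$), we obtain $s\cdot b=m\in A$, so $b\in s^{-1}A$ and therefore $b\in B\cap s^{-1}A$. This witnesses the required intersection and proves $(1)$.

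Part $(2)$ follows by the symmetric argument in the right weak ring, using right dilation invariance: for any essential $\mathcal{F}$-set $B$, the set $Bs$ is an essential $\mathcal{F}$-set, it meets $A$, and any witness $m=b\cdot s\in A\cap Bs$ places $b$ in $B\cap As^{-1}$. For part $(3)$, using associativity of $\cdot$ one checks $s^{-1}At^{-1}=s^{-1}(At^{-1})=\{y:s\cdot y\cdot t\in A\}$; applying $(2)$ to $A$ shows $At^{-1}$ is an essential $\mathcal{F}^{\star}$-set, and then applying $(1)$ to $At^{-1}$ shows $s^{-1}(At^{-1})$ is an essential $\mathcal{F}^{\star}$-set.

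The main obstacle is not the largeness bookkeeping but the absence of cancellation in a general weak ring. Over $\mathbb{N}$ one recovers the witness $k$ from $m=nk$ by dividing, whereas here I read the witness $b$ directly off the definition of the dilate $sB$, so no cancellation is needed. The one point to verify carefully is in part $(3)$, where I must confirm that the two-sided inverse $s^{-1}At^{-1}$ decomposes as $s^{-1}(At^{-1})$ so that $(1)$ and $(2)$ can be applied in sequence; this uses only the associativity of the multiplicative semigroup $(S,\cdot)$, which is part of the weak ring axioms.
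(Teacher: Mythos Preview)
Your proposal is correct and follows essentially the same approach as the paper: reduce to part $(1)$, show $B\cap s^{-1}A\neq\emptyset$ for every essential $\mathcal{F}$-set $B$ by using the preceding lemma to conclude $sB$ is an essential $\mathcal{F}$-set, intersect with $A$, and read off a witness $b\in B\cap s^{-1}A$; parts $(2)$ and $(3)$ then follow by symmetry and composition. Your added remarks on avoiding cancellation and on decomposing $s^{-1}At^{-1}$ via associativity are helpful clarifications that the paper leaves implicit.
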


\begin{proof}
It suffices to establish $(1)$ since then $(1)$ follows from a left-right
switch and $(3)$ follows from $(1)$ and $(2)$. It is sufficient
to show that for any essential $\mathcal{F}$-set $B$, $B\cap s^{-1}A\neq\emptyset$.
Since $B$ is essential $\mathcal{F}$-set , $sB$ is essential $\mathcal{F}$-set
and $A\cap sB\neq\emptyset$. Choose $u\in A\cap sB$ and $v\in B$
such that $u=sv$. Therefore $v\in s^{-1}A$ so $B\cap s^{-1}A\neq\emptyset$.
\end{proof}

\begin{defn}
A sequence $\langle x_{n}\rangle_{n=1}^{\infty}$ in $(S,\cdot )$
is $\mathcal{F}$-minimal sequence if 
\[
\cap_{m=1}^{\infty}cl\left(FP\left(\langle x_{n}\rangle_{n=m}^{\infty}\right)\right)\cap\beta\left(F\right)\neq\emptyset.
\]
\end{defn}

The following is the characterization of $\mathcal{F}$- minimal sequences for arbitrary semigroup.
\begin{thm}
For a left inverse shift invariant family $\mathcal{F}$ in a semigroup $(S,\cdot )$
with Ramsay property such that $\beta\left(\mathcal{F}\right)$ is a subsemigroup
of $\beta S$, the following conditions are equivalent:
\begin{enumerate}
\item $\langle x_{n}\rangle_{n=1}^{\infty}$ is almost $\mathcal{F}$- minimal
sequence.
\item $FP(\langle x_{n}\rangle_{n=1}^{\infty})\in q$, for some $q\in\beta(\mathcal{F})$.
\item There is an idempotent in $\cap_{m=1}^{\infty}cl(FP(\langle x_{n}\rangle_{n=m}^{\infty}))\cap\beta(\mathcal{F})$.
\end{enumerate}
\end{thm}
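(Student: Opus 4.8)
The plan is to prove the three conditions equivalent in a cycle, following verbatim the strategy of Theorem~\ref{equiv}, replacing the additive operation by the multiplicative one and $FS$ by $FP$. The implications $(1)\Rightarrow(2)$ and $(3)\Rightarrow(1)$ are immediate from the definition of an $\mathcal{F}$-minimal sequence: if $\langle x_n\rangle_{n=1}^\infty$ is $\mathcal{F}$-minimal, then any $q$ witnessing $\cap_{m=1}^\infty cl(FP(\langle x_n\rangle_{n=m}^\infty))\cap\beta(\mathcal{F})\neq\emptyset$ lies in particular in $cl(FP(\langle x_n\rangle_{n=1}^\infty))\cap\beta(\mathcal{F})$, whence $FP(\langle x_n\rangle_{n=1}^\infty)\in q$; and condition $(3)$ asserts that this intersection contains an idempotent, so it is nonempty, which is precisely $(1)$. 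Thus the entire content is in $(2)\Rightarrow(3)$.

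For $(2)\Rightarrow(3)$ I would first record the two structural facts that turn the relevant intersection into a semigroup. By \cite[Lemma 5.11]{key-12} (equivalently \cite[Theorem 4.20]{key-12}) the set $T=\cap_{m=1}^\infty cl(FP(\langle x_n\rangle_{n=m}^\infty))$ is a closed subsemigroup of $\beta S$, and $\beta(\mathcal{F})$ is a closed subsemigroup by hypothesis. Hence $T\cap\beta(\mathcal{F})$ is a closed, therefore compact, subsemigroup of $\beta S$, and the whole problem reduces to showing that it is nonempty: once that is done, Ellis's theorem produces an idempotent in it, giving $(3)$.

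To establish nonemptiness I would fix $m\in\mathbb{N}$ and exhibit a point of $\beta(\mathcal{F})$ lying in $cl(FP(\langle x_n\rangle_{n=m}^\infty))$. Starting from the ultrafilter $q\in\beta(\mathcal{F})$ supplied by $(2)$, I would use the decomposition
\[
FP(\langle x_n\rangle_{n=1}^\infty)=FP(\langle x_n\rangle_{n=m}^\infty)\cup FP(\langle x_n\rangle_{n=1}^{m-1})\cup\bigcup_{t\in FP(\langle x_n\rangle_{n=1}^{m-1})}t\cdot FP(\langle x_n\rangle_{n=m}^\infty),
\]
which is valid precisely because the products are taken in increasing order of indices, so every element of $FP(\langle x_n\rangle_{n=1}^\infty)$ factors at the cut $m$. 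Since this is a finite union and $FP(\langle x_n\rangle_{n=1}^\infty)\in q$, one of the three pieces belongs to $q$. The middle piece is a finite set and is excluded, as its membership would force $q$ to be principal. If the first piece lies in $q$, then $q$ itself already lies in $cl(FP(\langle x_n\rangle_{n=m}^\infty))\cap\beta(\mathcal{F})$ and we are done for this $m$.

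The remaining case, $t\cdot FP(\langle x_n\rangle_{n=m}^\infty)\in q$ for some $t\in FP(\langle x_n\rangle_{n=1}^{m-1})$, is the heart of the matter and the step I expect to be the main obstacle. Using continuity of $\lambda_t$ I would write $q=t\cdot r$ with $r\in cl(FP(\langle x_n\rangle_{n=m}^\infty))$, and then check $r\in\beta(\mathcal{F})$: for $F\in r$ one has $\lambda_t^{-1}(tF)\supseteq F\in r$, so $tF\in t\cdot r=q$; as $q\in\beta(\mathcal{F})$ this gives $tF\in\mathcal{F}$, and left inverse shift-invariance then yields $t^{-1}(tF)=F\in\mathcal{F}$, so $r\subseteq\mathcal{F}$. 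The two delicate points here, absent from the additive $\mathbb{N}$-setting of Theorem~\ref{equiv}, are the non-commutativity, which is handled by keeping products in increasing index order so that the cut at $m$ factors cleanly, and the identity $t^{-1}(tF)=F$, which relies on left cancellativity; left inverse shift-invariance of $\mathcal{F}$ is exactly the hypothesis that transports membership of $tF$ back to membership of $F$. Having produced such a point for every $m$, the sets $cl(FP(\langle x_n\rangle_{n=m}^\infty))\cap\beta(\mathcal{F})$ form a decreasing chain of nonempty compact sets, so by the finite intersection property their intersection $T\cap\beta(\mathcal{F})$ is nonempty, which completes $(2)\Rightarrow(3)$.
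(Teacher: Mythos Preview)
Your proposal is correct and follows exactly the route the paper intends: the paper's own proof is the single sentence ``The proof is same as the proof of Theorem~\ref{equiv}, and so we omit the proof,'' and you have carried out precisely that transcription, replacing $FS$ by $FP$ and additive shifts by multiplicative ones, with the same three-piece decomposition and the same Ellis-theorem endgame.

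You in fact go beyond the paper in one useful respect: you explicitly flag that the step ``$tF\in\mathcal{F}\Rightarrow F\in\mathcal{F}$'' passes through the identity $t^{-1}(tF)=F$, which needs left cancellativity of $S$. The paper's appeal to Theorem~\ref{equiv} silently inherits this from the cancellative setting $(\mathbb{N},+)$ and does not address it for a general semigroup; your observation is a genuine caveat that the stated hypotheses do not obviously cover. Apart from this (shared) subtlety, your argument matches the paper's intended proof line for line, and your added remark about the nested compact sets $cl(FP(\langle x_n\rangle_{n=m}^\infty))\cap\beta(\mathcal{F})$ having nonempty intersection makes explicit what the paper leaves implicit.
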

\begin{proof}
The proof is same as the proof of Theorem \ref{equiv}, and so we omit the proof.
\end{proof}

\noindent We now show that all $\mathcal{F}^{\star}$-set have a substantial
multiplicative property.
\begin{thm}
Let be a $\left(S,+,\cdot\right)$ be a weak ring. Let $\mathcal{F}$
be a left inverse shift invariant and dilation invariant family with Ramsay property such that $\beta\left(\mathcal{F}\right)$ is a subsemigroup
of $\beta S$. Let $\langle x_{n}\rangle_{n=1}^{\infty}$
be a $\mathcal{F}$-minimal sequence and $A$ be an essential $\mathcal{F^{\star}}$-set
in $(S,+)$. Then there exists a sum subsystem $\langle y_{n}\rangle_{n=1}^{\infty}$
of $\langle x_{n}\rangle_{n=1}^{\infty}$ such that\textup{ $FS(\langle y_{n}\rangle_{n=1}^{\infty})\cup AP(\langle y_{n}\rangle_{n=1}^{\infty})\subseteq A$.}
\end{thm}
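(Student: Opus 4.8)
The plan is to mirror the commutative argument of the theorem in the previous section, the only new ingredient being that products are now taken in arbitrary order, so the single dilation lemma must be replaced by the two-sided inverse-dilation lemma proved just above. Since $\langle x_{n}\rangle_{n=1}^{\infty}$ is $\mathcal{F}$-minimal, there is an essential idempotent $p\in E(\beta(\mathcal{F}))$ in $(\beta S,+)$ with $FS(\langle x_{n}\rangle_{n=m}^{\infty})\in p$ for every $m\in\mathbb{N}$. Because $A$ is an essential $\mathcal{F}^{\star}$-set in $(S,+)$, it lies in every additive essential idempotent, so $A\in p$; and by the inverse-dilation lemma above (parts $(1)$, $(2)$ and $(3)$) the sets $w^{-1}A$, $Aw'^{-1}$ and $w^{-1}Aw'^{-1}$ are again essential $\mathcal{F}^{\star}$-sets for all $w,w'\in S$, hence each of them also belongs to $p$. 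This is the only place where the weak-ring structure is used.

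With $p$ fixed, I would set $A^{\star}=\{x\in A:-x+A\in p\}$ and use the standard fact from the algebra of $\beta S$ (\cite{key-12}) that $A^{\star}\in p$ and that $-s+A^{\star}\in p$ for each $s\in A^{\star}$. The sum subsystem is then constructed by induction, carrying the invariants $\max H_{i}<\min H_{i+1}$, $FS(\langle y_{i}\rangle_{i=1}^{m})\subseteq A^{\star}$ and $AP(\langle y_{i}\rangle_{i=1}^{m})\subseteq A$. To pass from $m$ to $m+1$ I would consider the tail $B=FS(\langle x_{n}\rangle_{n>\max H_{m}})$, which lies in $p$, together with the finite intersection
\[
D=B\cap A^{\star}\cap\bigcap_{s\in FS(\langle y_{i}\rangle_{i=1}^{m})}\left(-s+A^{\star}\right)\cap\bigcap_{w,w'}\left(w^{-1}Aw'^{-1}\right),
\]
where $w,w'$ range over the (possibly empty) products of distinct terms of $\langle y_{i}\rangle_{i=1}^{m}$ that can sit to the left and to the right of a newly inserted factor $y_{m+1}$, an empty factor being simply deleted from the coset. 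Every set occurring in $D$ is a member of $p$, so $D\in p$ and in particular $D\neq\emptyset$; choosing $y_{m+1}=\sum_{t\in H_{m+1}}x_{t}\in D$ with $\min H_{m+1}>\max H_{m}$ continues the construction.

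The reason this forces the two desired structures into $A$ is the non-commutative bookkeeping for $AP$: any element of $AP(\langle y_{i}\rangle_{i=1}^{m+1})$ involving $y_{m+1}$ has the shape $w\,y_{m+1}\,w'$ with $w,w'$ as above, so that $w\,y_{m+1}\,w'\in A$ is literally the condition $y_{m+1}\in w^{-1}Aw'^{-1}$; simultaneously $y_{m+1}\in A^{\star}$ handles the singleton and $y_{m+1}\in-s+A^{\star}$ pushes each new sum $s+y_{m+1}$ into $A^{\star}$, preserving the invariants. I expect the main obstacle to be exactly the verification that $D\in p$, and this hinges on part $(3)$ of the inverse-dilation lemma, which is what makes each two-sided coset $w^{-1}Aw'^{-1}$ an essential $\mathcal{F}^{\star}$-set and therefore a member of the additive idempotent $p$. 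Once that membership is established, letting $m\to\infty$ yields $FS(\langle y_{n}\rangle_{n=1}^{\infty})\subseteq A^{\star}\subseteq A$ and $AP(\langle y_{n}\rangle_{n=1}^{\infty})\subseteq A$, which is the assertion.
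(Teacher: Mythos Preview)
Your proposal is correct and follows essentially the same route as the paper: pick an additive idempotent $p\in E(\beta(\mathcal{F}))$ containing all tails $FS(\langle x_n\rangle_{n\ge m})$, pass to $A^\star$, and at each inductive step intersect the tail set $B$ with $A^\star$, the additive shifts $-s+A^\star$ for $s\in FS(\langle y_i\rangle_{i=1}^m)$, and the two-sided multiplicative cosets coming from $AP(\langle y_i\rangle_{i=1}^m)$, using the inverse-dilation lemma to see that each such coset lies in $p$. The only cosmetic difference is that the paper carries the slightly stronger invariant $AP(\langle y_i\rangle_{i=1}^m)\subseteq A^\star$ and accordingly writes the cosets as $s^{-1}A^\star$, $A^\star t^{-1}$, $s^{-1}A^\star t^{-1}$, whereas you keep $AP\subseteq A$ and use $w^{-1}Aw'^{-1}$; your version is in fact more directly justified by the lemma as stated.
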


\begin{proof}
Since $\langle x_{n}\rangle_{n=1}^{\infty}$ is a $\mathcal{F}$-minimal
sequence in $(S,+)$, we can find some essential idempotent $p\in E\left(\beta\left(\mathcal{\mathcal{F}}\right)\right)$
for which $FS(\langle x_{n}\rangle_{n=m}^{\infty})\in p$ for each $m\in p$. Since $A$
is an essential $\mathcal{F}^{\star}$-set for every $s,t\in S$,
$s^{-1}A,t^{-1}A,s^{-1}At^{-1}\in p$. Let $A^{\star}=\{s\in A:-s+A\in p\}$,
then $A^{\star}\in p$. We can choose $y_{1}\in A^{\star}\cap FS(\langle x_{n}\rangle_{n=1}^{\infty})$.
Inductively, let $m\in\mathbb{N}$ and $\langle y_{i}\rangle_{i=1}^{m}$,
$\langle H_{i}\rangle_{i=1}^{m}$ in $\mathcal{P}_{f}(\mathbb{N})$
be chosen with the following properties:
\begin{enumerate}
\item For $i\in\{1,2,\ldots\,,m-1\},\,\text{max}H_{i}<\text{min}H_{i+1}$
\item $\text{If}\,y_{i}={\displaystyle \sum_{t\in H_{i}}x_{t},\,\text{then}\,\sum_{t\in H_{i}}x_{t}\in A^{\star},\,\text{and}\,AP(\langle y_{i}\rangle_{i=1}^{m})\subseteq A^\star}.$
\end{enumerate}
We observe that $\{\,\sum_{t\in H}x_{t}:\,H\in\mathcal{P}_{f}(\mathbb{N}),\,\text{min}H>\text{max}H_{m}\}\in p$.
Let us set $B=\{\,\sum_{t\in H}x_{t}:\,H\in\mathcal{P}_{f}(\mathbb{N}),\,\text{min}H>\text{max}H_{m}\}$,
$E_{1}=FS(\langle y_{t}\rangle_{n=1}^{m})$, and $E_{2}=AP(\langle y_{t}\rangle_{n=1}^{m})$.
Now consider
 $$D=B\cap A^{\star}\cap\bigcap_{s\in E_{1}}\left(-s+A^{\star}\right)\cap\bigcap_{s\in E_{2}}\left(s^{-1}A^{\star}\right)\cap\bigcap_{s\in E_{2}}\left(A^{\star}s^{-1}\right)\cap\bigcap_{s,t\in E_{2}}\left(s^{-1}A^{\star}t^{-1}\right).$$
Then $D\in p$. Choose $y_{m+1}\in D$, and $H_{m+1}\in\mathcal{P}_{f}\left(\mathbb{N}\right)$,
such that $\text{min}H_{m+1}>\text{max}H_{m}$. Putting $y_{m+1}=\sum_{t\in H_{m+1}}x_{t}$,
it shows that the induction can be continued and proves the theorem.
\end{proof}
\vspace{0.1in}

\end{document}